\documentclass[11pt]{amsart}
\usepackage[utf8]{inputenc}
\usepackage[T1]{fontenc}
\usepackage{lmodern}
\usepackage{microtype}
\usepackage{amsmath,amssymb,amsthm,mathtools}
\usepackage{xcolor}
\usepackage[a4paper,margin=31mm]{geometry}
\usepackage[colorlinks=true,linkcolor=blue!55!black,citecolor=blue!55!black,urlcolor=blue!55!black]{hyperref}
\usepackage[nameinlink,noabbrev]{cleveref}
\hypersetup{pdftitle={The Entire-Graph Anisotropic Bernstein Theorem for Integrands C2-Close to Area}}

\newtheorem{theorem}{Theorem}[section]
\newtheorem{lemma}[theorem]{Lemma}
\newtheorem{proposition}[theorem]{Proposition}

\newcommand{\Hn}{\mathcal{H}^{n}}
\newcommand{\Aphi}{\mathcal{A}_{\Phi}}

\newcommand{\R}{\mathbb R}
\newcommand{\Sph}{\mathbb S}
\newcommand{\Haus}{\mathcal H}
\newcommand{\Per}{P}
\newcommand{\F}{\mathcal F}
\newcommand{\supp}{\operatorname{spt}}

\newcommand{\osc}{\operatorname{osc}}

\newcommand{\diver}{\operatorname{div}}

\newcommand{\Id}{\mathrm{Id}}
\newcommand{\Tr}{\operatorname{Tr}}
\newcommand{\Cyl}{\mathcal C}
\newcommand{\Mass}{\mathbf M}
\newcommand{\llbracketbracket}[1]{\mathopen{[\![}#1\mathclose{]\!]}}

\title[$C^2$-near-area anisotropic Bernstein theorem]
{A $C^2$-Perturbative Bernstein Theorem for Anisotropic Entire Minimal Graphs}
\author{}
\date{}
\author{Lu Chen}
\address[Lu Chen]{Key Laboratory of Algebraic Lie Theory and Analysis of Ministry of Education, School of Mathematics and Statistics, Beijing Institute of Technology, Beijing
100081, PR China;  Tangshan Research Institute, Beijing Institute of Technology, Tangshan 063000, PR China}
\email{chenlu5818804@163.com}

\author{Jiali Lan}
\address[Jiali Lan]{Key Laboratory of Algebraic Lie Theory and Analysis of Ministry of Education, School of Mathematics and Statistics, Beijing Institute of Technology, Beijing
100081, PR China}
\email{17636268505@163.com}

\address{}

\keywords{Anisotropic Bernstein problem; Entire minimal graphs; $C^2$-perturbations of the area integrand.}
\thanks{The first author was partly supported by the  National Natural Science Foundation of China (No. 12271027) and Hebei Natural
Science Foundation (No. A2025105003). }
\begin{document}
\begin{abstract}

We prove a  Bernstein theorem for $\Phi$-anisotropic minimal hypersurfaces in dimensions $1\leq n\leq 7$ that the only entire smooth solutions $u$ of $\Phi$-anisotropic minimal hypersurfaces  equation are affine functions provided the anisotropic area functional integrand $\Phi$ is sufficiently $C^{2}$--close to the Euclidean area integrand. This settles the $C^2$ entire--graph version of anisotropic Bernstein problem posed by Mooney and Yang \cite{MooneyYang2024}, and the proof uses a compactness--rigidity argument combined with Figalli's regularity theorem established in \cite{Figalli2017}.

\end{abstract}

\maketitle

\section{Introduction}\label{sec:introduction}

In this paper we study entire graphical solutions of anisotropic minimal hyper
surfaces in $\R^{n+1}$ as critical points of parametric elliptic functionals.  For an oriented hypersurface $\Sigma\subset\R^{n+1}$ with unit normal $\nu_{\Sigma}$, its
anisotropic area is
\begin{equation}\label{eq:anisotropic-area}
    \Aphi(\Sigma):=\int_{\Sigma}\Phi(\nu_{\Sigma})\,d\Hn,
\end{equation}
where $  \Phi\colon \R^{n+1}\setminus\{0\}\longrightarrow (0,\infty)$
is convex, positively one-homogeneous, and sufficiently regular away from the origin.  The density $\Phi$ geometrically represents a Minkowski norm on $\mathbb{R}^{n+1}$ and physically models the direction-dependent surface tension of an interface. In materials science, this functional arises naturally to describe the surface free energy of crystalline solids, where different crystallographic planes exhibit distinct packing densities and atomic bonds, leading to the celebrated Wulff construction for equilibrium crystal shapes.


If $\Sigma$ is the graph of $u\colon\R^{n}\to\R$, with the upward orientation, set
\begin{equation}\label{eq:graph-integrand}
    \varphi(p):=\Phi(-p,1),\qquad p\in\R^{n}.
\end{equation}
Then the anisotropic area of the graph over a bounded set $\Omega\subset\R^{n}$ is
$\int_{\Omega}\varphi(Du)\,dx$, and the Euler--Lagrange equation is
\begin{equation}\label{eq:anisotropic-graph-equation}
    \operatorname{div}\bigl(D\varphi(Du)\bigr)=0
    \qquad\text{in }\R^{n}.
\end{equation}

The anisotropic Bernstein problem asks  whether every entire solution of \eqref{eq:anisotropic-graph-equation} is affine. In particular, when $\Phi(\xi)=|\xi|$, it is known through  pioneering works   of Bernstein, Fleming \cite{Fleming1962}, De Giorgi \cite{DeGiorgi1965}, Almgren \cite{Almgren1966} and Simons \cite{Simons1968} that  the only entire smooth graphical solutions of the classical graphical minimal hypersurface equation on $\R^n$ are linear functions, provided that  $n \leq 7$, and the result fails when $n>7$ by the counter example of  Bombieri-De Giorgi-Giusti in \cite{BDGG1969}.
For   general uniformly elliptic integrands,   Jenkins \cite{Jenkins1961} proved that all entire smooth $\Aphi$--minimal graphs are linear in dimension $n=2$, and Simon  \cite{Simon1977} did so in dimension $n=3$.  By constructing
nonflat minimizers  for certain $\Phi$ away from the area integrand, Mooney-Yang \cite{MooneyYang2024} showed that the anisotropic Bernstein theorem for entire smooth graphical solutions can only hold up to $n=3$. 
We also mention that in a recent work \cite{ChenLanLiu2025}, the authors established the solvability of the anisotropic Dirichlet problem under a natural nonnegative anisotropic mean curvature condition,  extending the classical Jenkins--Serrin theorem \cite{JS} to the anisotropic case.

For integrands close to the Euclidean area integrand, Simon
\cite{Simon1977} proved that the Bernstein theorem persists up to dimension
$n=7$, provided that the integrand of the associated parametric functional
\eqref{eq:anisotropic-area} is sufficiently close to the area integrand in
the $C^3$ topology.    The
flatness of stable critical points holds in dimension $n = 2$ under the hypothesis of $C^2$ closeness to the area functional \cite{Lin1990} and in dimension $n = 3$ under $C^4$ perturbations by Chodosh-Li \cite{CL2023}.   In the formulation emphasized by Mooney-Yang \cite[Section~6.4]{MooneyYang2024},   they ask
whether the topology of closeness can be relaxed to $C^{2}$. More recently, Du-Yang  \cite{DuYang2024} obtained an
all-dimensional Bernstein theorem for integrands sufficiently $C^{3}$-close to area under
an additional quantitative growth restriction on $|Du|$.  These results, however, do not settle the unconditional entire-graph problem at the natural  $C^2$-regularity  level  for the second variation.

Indeed, if $\phi=\Phi|_{\Sph^{n}}$, $\nu\in\Sph^{n}$, and $\tau\perp\nu$, positive
one-homogeneity gives
\begin{equation}\label{eq:spherical-hessian}
    D^{2}\Phi(\nu)[\tau,\tau]
    =\nabla^{2}_{\Sph^{n}}\phi(\nu)[\tau,\tau]
      +\phi(\nu)|\tau|^{2}.
\end{equation}
Thus $C^{2}$-closeness on the sphere controls the energy, its first variation, and the
parametric ellipticity governing the second variation.  It gives no uniform control of
$D^{3}\Phi$ or $D^{4}\Phi$.  The question raised in \cite{MooneyYang2024} is therefore not
merely whether one can improve a regularity exponent in an existing perturbative proof.
It asks whether the Bernstein argument can be reorganized so that it uses only the
geometric structure genuinely contained in the second-order topology.  The purpose
of this paper is to give an affirmative answer  for the entire-graph version of this open
problem.

\begin{theorem}[Entire-graph $C^2$ Bernstein theorem]\label{thm:main}
 For every $1\le n\le 7$, let $u$ be an entire smooth solution to the anisotropic minimal surface equation
\begin{equation}\label{eq:graphpde}
 \diver\bigl(D\varphi(Du)\bigr)=0\quad\text{in }\R^n,
\end{equation}
where $\varphi$ is defined by \eqref{eq:graph-integrand}. If there exists $\delta_n>0$ such that
\begin{equation}\label{eq:c2close}
\|\Phi-1\|_{C^2(\Sph^n)}<\delta_n,
\end{equation}
then $u$ is affine.
\end{theorem}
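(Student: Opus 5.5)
The strategy is compactness and rigidity: we blow down the graph and compare its tangent cones at infinity with area-minimizing cones. Figalli's regularity theorem \cite{Figalli2017} supplies $\varepsilon$-regularity for anisotropic minimizers whose integrand is only $C^2$-close to area. Together with the classical Bernstein/Simons theorem for $n\le 7$, this should give flatness at every scale.

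\emph{Step 1: minimality and density bounds.} Since $\varphi$ is convex, the graph $\Sigma$ of $u$ is $\Aphi$-minimizing in $\R^{n+1}$. A calibration argument with the vector field $D\Phi(\nu)$, extended vertically, shows that the subgraph $E=\{x_{n+1}<u(x)\}$ minimizes $\Aphi$ among perimeters. Because $|\Phi-1|<\delta_n$, comparison gives the perimeter bounds
\[
\Per(E;B_r)\le C_n r^n .
\]
The isoperimetric inequality gives lower density bounds, so the rescalings $E_r=E/r$ are precompact. This step uses only $C^0$-closeness.

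\emph{Step 2: contradiction and compactness.} Suppose the theorem fails for every $\delta_n$. Then there are integrands $\Phi_k$ with $\|\Phi_k-1\|_{C^2}\to0$ and nonaffine entire solutions $u_k$. We want to obtain flat blow-downs. Fix $\varepsilon_0$ as in Figalli's $\varepsilon$-regularity theorem, valid uniformly for integrands $\delta$-close in $C^2$. Since $u_k$ is nonaffine, the blow-down of a single $u_k$ need not be flat, because $\Phi_k$ is fixed. The argument therefore has to run uniformly over the integrands. Let $\mathcal M_\delta$ be the class of all $\Aphi$-minimizing boundaries with $\|\Phi-1\|_{C^2}<\delta$ that arise as limits of graphs, that is, whose boundaries have normals with $\nu_{n+1}\ge0$. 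We claim that for $\delta$ small, every such set has flat tangent cone at infinity in the sense that its excess at large scales is below $\varepsilon_0$. Suppose not. Take a sequence with $\delta_k\to0$ and choose scales $r_k$ at which the excess equals $\varepsilon_0$ for the first time from above (or at infinity). The rescaled sets converge to a perimeter minimizer in $\R^{n+1}$, since $\Aphi$-minimality passes to the limit as $\Phi_k\to|\cdot|$ uniformly. The limit has boundary that is a monotone limit of graphs. By the classical theory, for $n\le7$ such a limit is a cylinder over a minimizing cone of dimension $\le 6$, hence flat (De Giorgi, Miranda, Simons). Flatness contradicts the excess being $\varepsilon_0$. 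Convergence in $L^1$ upgrades to excess convergence via the uniform density bounds and lower semicontinuity.

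\emph{Step 3: improvement of flatness at all scales.} Once the excess of $\Sigma$ in $B_R$ is below $\varepsilon_0$ for all large $R$, Figalli's theorem gives, on each ball $B_{R/2}$, a $C^{1,\alpha}$ graph representation over some plane. The estimate is scale invariant:
\[
\osc_{B_{R/2}}\nu\le C\bigl(\text{excess in }B_R\bigr)^{1/2}.
\]
Applying the same compactness at smaller and smaller excess, so that the blow-down limit is a plane with excess tending to $0$, the excess in $B_R$ tends to zero as $R\to\infty$ along a uniform rate. Then $\osc_{\R^n}Du=0$, so $u$ is affine. The main obstacle is Step 2. We must ensure that the blow-down limits are graphs or cylinders, so that the Bernstein theorem applies rather than just Simons' cone result. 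We must also handle the possibility of vertical pieces where $Du$ is unbounded. To handle both, I would work with the subgraph sets throughout. I would use that a limit of subgraphs is invariant under upward translation, hence a cylinder $C\times\R$ with $C$ minimizing in $\R^n$, which is flat for $n\le7$. I would then check that the vertical-hyperplane case is excluded by a contradiction with the excess bookkeeping in the original graph's direction.
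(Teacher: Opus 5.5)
Your architecture matches the paper's: contradiction with $\Phi_k\to|\cdot|$ in $C^2$, compactness to a directed Euclidean minimizer, rigidity for $n\le7$, and Figalli's theorem with uniform constants. However, two steps fail as written.

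\textbf{First, the rigidity of the limit in Step 2.} A limit of rescaled subgraphs at finite scales $r_k$ is neither a cone nor invariant under vertical translation. It only inherits the one-sided inclusion $E-se_{n+1}\subset E$; the subgraph of any affine function, taken as a constant sequence, already refutes your claim. The splitting $C'\times\R$ is a fact about directed \emph{minimizing cones}. There $w=\nu\cdot a\ge0$ is a zero-homogeneous Jacobi field, so $\Delta_\Gamma w+|A_\Gamma|^2w=0$ on the link. Either $w>0$, and integrating gives $A_\Gamma\equiv0$, or $w\equiv0$, which gives the splitting and then Simons applies in $\R^n$. What is missing is the transfer from cones back to the finite-scale limit $E$, which the paper carries out in Proposition 3.1. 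Since $E$ is a Euclidean minimizer, the monotonicity formula is available. Its blow-down is a directed minimizing cone, hence a halfspace, so $\Theta_\infty=1$. Together with $\Theta(0^+)=1$ at a reduced-boundary point, the equality case of monotonicity forces $E$ itself to be a halfspace. Vertical halfspaces are genuine limits (e.g.\ $u_k=kx_1$) and need no exclusion, because Figalli's theorem is applied after a rotation; the paper explicitly lets $a$ be tangent to $\partial E$. Also, upgrading $L^1$ convergence to excess convergence needs the cut-and-paste bound $\limsup_k\Per(E_k;B_r)\le\Per(E;B_r)$ coming from minimality; lower semicontinuity goes the wrong way.

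\textbf{Second, Step 3 is circular.} For one $u$ with a fixed $\Phi$ you cannot ``apply the same compactness at smaller and smaller excess.'' Lowering the excess threshold forces $\delta\to0$, and blow-downs of a fixed solution are $\Phi$-minimizers. For these there is no monotonicity formula, and their flatness is exactly what is in question. For fixed small $\delta$, Step 2 and Figalli's estimate only give that $\osc_\Sigma\nu\le C\varepsilon_0^{1/2}$, since the sets $\Sigma\cap B_{R/2}$ exhaust $\Sigma$. They do not give that the excess tends to zero: your bound $\osc_{B_{R/2}}\nu\le C(\text{excess})^{1/2}$ has no decay in $R$. You need a separate Liouville step, for example the paper's Lemma 2.2. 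If $\nu\cdot N>\tfrac12$ on $\Sigma$, then $\Sigma$ is an entire graph over $N^\perp$ with $|Dv|<\sqrt3$. Each $\partial_\alpha v$ then solves a uniformly elliptic divergence-form equation, and De Giorgi--Nash oscillation decay with $R\to\infty$ makes it constant. Alternatively, use the scaled H\"older bound $\osc_{\Sigma\cap B_r}\nu\le C(r/R)^\alpha$ from Figalli's $C^{1,\alpha}$ estimate and let $R\to\infty$. The paper puts this Liouville step first. Lemma 2.2 converts non-flatness into a two-point gap $|\nu(p)-\nu(q)|\ge1$, and normalizing by $|p-q|$ fixes the scale. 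The contradiction is then immediate once Figalli gives uniform convergence of the normals.
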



The proof is based on a compactness--rigidity scheme  that depends only on the $C^{2}$ structure of the integrand. Arguing by contradiction that there exists a sequence of non-affine entire solutions $u_j$ whose integrands $\Phi_j$ converge to the Euclidean area integrand $|\cdot|$ in $C^{2}(\mathbb{S}^n)$.  We will  shows that each subgraph of $u_j$ is a global $\Phi_j$--minimizer and that its oriented boundary current is locally absolutely minimizing. Moreover, the subgraphs satisfy a one-sided translation monotonicity, which is preserved under local \(L^{1}\) convergence. Nonflatness is then characterized  by a quantitative Gauss-image gap: if a non-flat counterexample exists, it must contain two points with a scale-invariant separation of their normals. After normalizing by the distance between these two points, compactness yields a limiting Euclidean perimeter minimizer. A blow-down of this global minimizer yields a nontrivial directed minimizing cone, which must be a halfspace for $n\leq 7$ by the classification of directed minimizing cones. The Euclidean monotonicity formula then  transfers the halfspace rigidity from the blow--down to the original limiting minimizer, showing that  the original limiting minimizer is itself a halfspace. Once the limit is a multiplicity-one plane, Figalli's regularity theorem improves the convergence to $C^{1,\alpha}$ graphical convergence and uniform normal convergence, contradicting the Gauss-image gap. The argument thus splits into three components: variational compactness, directed Euclidean rigidity, and low-regularity improvement of flatness.

We finally emphasize the scope of the result.  Mooney and Yang's discussion in
\cite[Section~6.4]{MooneyYang2024} includes broader $C^{2}$-closeness questions for stable
critical hypersurfaces.  Theorem~\ref{thm:main} settles the entire-graph branch, where
the calibration and directedness of a subgraph provide global structure not available for a
general stable critical hypersurface.

This paper is organized as follows.  In
Section~\ref{sec:variational-compactness}, we transform  the
entire-graph equation into a variational problem, establish the quantitative
Gauss-image gap, and develop compactness for minimizers with a sequence of
anisotropic integrands.  Section~\ref{sec:directed-rigidity} proves the classification of
directed Euclidean global perimeter minimizers for dimensions $n\leq7$.  In
Section~\ref{sec:flatness-main-proof},  we apply Figalli's regularity theorem to obtain the low-regularity flat compactness needed near a multiplicity-one plane, and complete the proof of Theorem~\ref{thm:main}.

\section{Variational Method and Compactness}
\label{sec:variational-compactness}

This section develops the variational and compactness argument.  We derive uniform ellipticity from the \( C^2 \) assumption, and then obtain the Gauss-image gap lemma. We also prove a compactness theorem for a sequence of $\Phi_j$--minimizers.

\subsection{ Uniform ellipticity}
\label{subsec:setting-ellipticity}

Let $U\subset\mathbb{R}^{n+1}$ be an open set. The  total variation of a function $u\in BV(U)$ with respect to  $\Phi$ is given by
\[\int_{U}|Du|_{\Phi}dx=\sup\left\{\int_{U}u\mathrm{div}\sigma dx: \sigma\in C_0^1(U,\mathbb{R}^{n+1}),\  \Phi^o(\sigma)\leq 1\right\},\]
where $\Phi^o(z):=\sup_{\xi}\frac{\left<\xi,z\right>}{\Phi(\xi)}$ is the dual metric of $\Phi$.
This yields the  generalized  definition of perimeter of a set $E$ with respect to $\Phi$:
\[P_{\Phi}(E,U)=\int_{U}|D\chi_{E}|_{\Phi}dx=\sup\left\{\int_{E}\mathrm{div}\sigma dx: \sigma\in C_0^1(U,\mathbb{R}^{n+1}),\ \Phi^o(\sigma)\leq 1\right\},\]
 which can be rewritten as
\[P_{\Phi}(E,U)=\int_{U\cap\partial^* E}\Phi(\nu_E)d\mathcal{H}^{n},\]
  where $\partial^* E$ is the reduced boundary of $E$ and $\nu_E$ is the outer normal to $E$ (see \cite{AB}). We call $E$ a global $\Phi$--minimizer if $P_{\Phi}(E,U)\leq P_{\Phi}(F,U)$ whenever $U\Subset\mathbb{R}^{n+1}$ and $E \triangle F\Subset U$.

   If $\phi=\Phi|_{\Sph^n}$, $\nu\in\Sph^n$,
and $\tau\perp\nu$, positive one-homogeneity gives
 \begin{equation}\label{eq:hessian-sphere-early}
 D^2\Phi(\nu)[\tau,\tau]
 =\nabla^2_{\Sph^n}\phi(\nu)[\tau,\tau]
   +\phi(\nu)|\tau|^2.
\end{equation}
Thus, after requiring $\delta_n<1/4$, one has
\[
 \tfrac12|\tau|^2\le D^2\Phi(\nu)[\tau,\tau]
 \le\tfrac32|\tau|^2,
 \qquad \tfrac34\le\Phi(\nu)\le\tfrac54,
\]
Therefore, the integrand $\Phi$ is uniformly elliptic, and its restriction to the sphere has a uniform $C^{1,1}$ norm.

\subsection{A uniform Gauss-image gap}
\label{subsec:calibration-gauss}

We first introduce some notations. Fix the Euclidean volume form $dV$.  For an oriented unit simple
$n$-vector $\xi$, let $\nu_{\xi}$ be the unique associated unit normal for
which the induced tangent $n$-form is $\iota_{\nu_\xi}dV$.  This fixes all orientation signs below.  For an
oriented rectifiable $n$-current $T$ and an open set $U$, set
\[
 \F_\Phi(T;U):=\int_U\Phi(\nu_{\vec T})\,d\|T\|.
\]
We omit $U$ when $T$ has finite mass and the integral is over all of
$\R^{n+1}$.
For $T=\partial\llbracketbracket{E}$, we use the orientation for which
$\nu_{\vec T}=\nu_E$.

\begin{lemma}\label{lem:calibration}
Let
$u\in C^2(\R^n)$ be the solution of \eqref{eq:graphpde}.  Then the subgraph of $u$
\[
 E_u:=\{(x,t)\in\R^n\times\R:t<u(x)\}
\]
is a global $\Phi$-minimizer.  Moreover,
\begin{equation}\label{eq:directed-original}
 E_u-se_{n+1}\subset E_u\qquad(s>0).
\end{equation}
In fact, the oriented boundary current
$T_u=\partial\llbracketbracket{E_u}$ is locally $\Phi$--minimizer:
for all rectifiable $n$--currents $X$ with $\partial X=0$ and $\supp X\Subset U\Subset\R^{n+1}$, we have
 \[\F_\Phi(T_u;U)\le \F_\Phi(T_u+X;U).\]
\end{lemma}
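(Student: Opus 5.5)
The plan is a calibration argument built from the vertically translation-invariant extension of the anisotropic Cahn--Hoffman field of the graph. Directedness \eqref{eq:directed-original} is immediate: if $(x,t)\in E_u-se_{n+1}$ with $s>0$, then $t+s<u(x)$, so $t<u(x)$.

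For the calibration, let $\nu_u(x)=(-Du(x),1)/\sqrt{1+|Du|^2}$. With the paper's orientation convention, the outer normal of $E_u$ at $(x,u(x))$ is this upward normal. Define the vector field on $\R^{n+1}$
\[
 Z(x,t):=D\Phi\bigl(\nu_u(x)\bigr),
\]
which is independent of $t$ and is $C^1$ because $u\in C^2$ and $\Phi\in C^2$ away from the origin. Two properties are needed.

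First, $Z$ is divergence free. By zero-homogeneity of $D\Phi$ we have $Z=D\Phi(-Du,1)$. Differentiating $\varphi(p)=\Phi(-p,1)$ gives $D\varphi(p)=-(\partial_1\Phi,\dots,\partial_n\Phi)(-p,1)$, so the horizontal part of $Z$ equals $-D\varphi(Du)$. Since $Z$ does not depend on $t$,
\[
 \diver_{\R^{n+1}}Z=-\diver_{\R^n}\bigl(D\varphi(Du)\bigr)=0
\]
by \eqref{eq:graphpde}.

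Second, $Z$ calibrates. Convexity and one-homogeneity give $\langle D\Phi(\nu),\eta\rangle\le\Phi(\eta)$ for all $\eta$, with equality when $\eta=\nu$ (Euler's identity). Equivalently $\Phi^o(Z)\le1$.

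The current inequality then follows from Stokes' theorem. Let $\omega$ be the $n$-form $\iota_Z dV$. Then $d\omega=(\diver Z)\,dV=0$, and for an oriented rectifiable current with normal $\nu_{\vec T}$ we have $\langle\omega,\vec T\rangle=\langle Z,\nu_{\vec T}\rangle\le\Phi(\nu_{\vec T})$, with equality on $T_u$ $\|T_u\|$-a.e. Let $X$ be a rectifiable cycle with $\supp X\Subset U$. Since $\R^{n+1}$ has trivial homology, $X=\partial Y$ for some compactly supported integral $(n+1)$-current $Y$ (the cone over $X$); equivalently, $X(\omega)=0$ because $\omega$ is closed. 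Hence
\[
 \F_\Phi(T_u;U)=(T_u\llcorner U)(\omega)=\bigl((T_u+X)\llcorner U\bigr)(\omega)\le\F_\Phi(T_u+X;U).
\]
This uses that $\omega$ is defined on all of $\R^{n+1}$. A cutoff version is cleaner: apply the identity to a neighborhood $U'\Subset U$ of $\supp X$ chosen so that $T_u$ and $T_u+X$ coincide near $\partial U'$. Then split the integrals over $U'$ and $U\setminus U'$; on $U\setminus U'$ the two currents agree.

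Global minimality of $E_u$ follows the same way. Take $F$ with $E_u\triangle F\Subset U$ and apply the divergence theorem to $Z$ on $E_u\setminus F$ and $F\setminus E_u$; by De Giorgi's structure theorem this amounts to applying the current argument to $X=\partial\llbracketbracket{F}-\partial\llbracketbracket{E_u}=\partial\llbracketbracket{F}-\partial\llbracketbracket{E_u}$ restricted appropriately, i.e.\ $\partial(\llbracketbracket{F}-\llbracketbracket{E_u})$, which is a cycle supported in a compact subset of $U$. This gives $P_\Phi(E_u,U)\le P_\Phi(F,U)$.

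The only delicate point is bookkeeping the orientation signs, so that $\nu_{\vec T_u}=\nu_E$ is the upward normal and the sign in $\langle\omega,\vec T\rangle=\langle Z,\nu\rangle$ is the one fixed by the convention $\iota_{\nu_\xi}dV$. The rest is routine.
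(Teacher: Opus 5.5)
Your proposal is correct and follows essentially the same route as the paper: the $t$-independent field $Z=D\Phi(\nu_u)$, divergence-free by the graph equation and calibrating by convexity and Euler's identity, the comparison $(T_u\llcorner U)(\omega)=((T_u+X)\llcorner U)(\omega)\le\F_\Phi(T_u+X;U)$, and the choice $X=\partial(\llbracketbracket{F}-\llbracketbracket{E_u})$ for set minimality. The only difference is cosmetic: you get $X(\omega)=0$ from the cone over $X$ together with classical closedness of the $C^1$ form $\omega$, whereas the paper mollifies $\omega$ and uses the dual radial-homotopy primitive $\alpha_\varepsilon$ with a cutoff.
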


\begin{proof}
The outer unit normal to the subgraph is
\[
 \nu(x)=\frac{(-Du(x),1)}{\sqrt{1+|Du(x)|^2}}.
\]
The graph area formula and one-homogeneity show that its anisotropic area
over a bounded base domain $\Omega$ is
\[
 \int_\Omega \Phi(-Du,1)\,dx=\int_\Omega\varphi(Du)\,dx.
\]
Since $D\Phi$ is zero-homogeneous, the vector field
\[
 Z(x,t):=D\Phi(-Du(x),1)=D\Phi(\nu(x))
\]
is well defined and independent of $t$.  Differentiating
$\varphi(p)=\Phi(-p,1)$ gives
\[
 D\varphi(p)=-\bigl(D\Phi(-p,1)\bigr)_{1,\ldots,n}.
\]
Thus \eqref{eq:graphpde} is precisely
\[
 \diver_{\R^{n+1}}Z=0.
\]

 For $\xi\ne0$, convexity and one-homogeneity imply
\[
 \Phi(\eta)\ge \Phi(\xi)+D\Phi(\xi)\cdot(\eta-\xi)=D\Phi(\xi)\cdot \eta.
\]
Therefore, for every $\eta\in\R^{n+1}$, we get
\begin{equation}\label{eq:calibration-ineq}
 D\Phi(\nu)\cdot\eta\le \Phi(\eta),
 \qquad D\Phi(\nu)\cdot\nu=\Phi(\nu).
\end{equation}
Let $dV$ be the Euclidean volume form and set $\omega=\iota_ZdV$.  Since
$d(\iota_ZdV)=(\diver Z)dV$, the $n$-form $\omega$ is closed in the weak
sense.  For every oriented unit $n$-vector $\xi$, \eqref{eq:calibration-ineq} gives
\begin{equation}\label{e2.1}
 \omega(\xi)=Z\cdot\nu_{\xi}\le\Phi(\nu_{\xi}),
\end{equation}
with equality on $T_u$.
Since $d\omega=0$  in the distributional sense, its standard mollifications
$\omega_\varepsilon=\rho_\varepsilon*\omega$ are smooth, closed, and uniformly
bounded.  The radial homotopy
operator applied to $\omega_\varepsilon$ gives the smooth $(n-1)$-form
\begin{equation}\label{eq:radial-primitive}
 (\alpha_\varepsilon)_x(v_1,\ldots,v_{n-1})
 =\int_0^1t^{n-1}
   (\omega_\varepsilon)_{tx}(x,v_1,\ldots,v_{n-1})\,dt,
\end{equation}
which satisfies
\begin{equation}\label{eq:radial-bound}
 d\alpha_\varepsilon=\omega_\varepsilon,
 \qquad |\alpha_\varepsilon(x)|\le C|x|,
\end{equation}
with $C$ independent of $\varepsilon$.  Let $X$ be a rectifiable
$n$--currents and let $0 \le\chi_R=1 \le 1$ be a  cutoff function satisfies
\[  \chi_R=1 \text{  on  } B_R,\quad \chi_R=0 \text{  on  } B_{2R}^c\quad\text{and}\quad |D\chi_R|\le C/R.\]
Since
$\chi_R\alpha_\varepsilon$ is smooth and compactly supported, $\partial X=0$ gives
\begin{equation}\label{eq:calibration-cutoff}
 0=\partial X(\chi_R\alpha_\varepsilon)
  =X(d\chi_R\wedge\alpha_\varepsilon)
    +X(\chi_R\omega_\varepsilon).
\end{equation}

\noindent Notice that $X(s\chi_R\wedge\alpha_{\varepsilon})\leq C\|X\|(B_{2R}\setminus B_R)\to 0$ as $R\to\infty$, hence the dominated convergence theorem yields $X(\omega_{\varepsilon})=0$. Letting $\varepsilon\to0$ and using the dominated convergence theorem again, we obtain
\[X(\omega)=0.\]

\noindent  Set $S=T_u\llcorner U$, from \eqref{e2.1} we obtain
\[
 \F_\Phi(T_u+X;U)=\F_\Phi(S+X)
 \ge(S+X)(\omega)=S(\omega)=\F_\Phi(T_u;U),
\]
where $S(w):=\int_{S}w$.
For a finite-perimeter compact perturbation $F$ of $E_u$, take
\[
 X=\partial\bigl(\llbracketbracket{F}-
                    \llbracketbracket{E_u}\bigr).
\]
This current inequality is exactly the desired set-minimality inequality, provided $U$ is chosen  that $\supp X\subset U$.

 Finally, if $(x,t)\in E_u$, then
$t-s<u(x)$ for every $s>0$, and hence
$E_u-se_{n+1}\subset E_u$.  This is \eqref{eq:directed-original}.
\end{proof}

The following lemma states that a non-flat hypersurface has two points with a fixed separation of normals.

\begin{lemma}[Gauss-image gap]\label{lem:gaussgap}
Let $\Sigma\subset\R^{n+1}$ be a connected complete $C^2$ hypersurface which is
an entire graph in some direction.  If $\Sigma$ is
not a hyperplane,
then, for every $p\in\Sigma$, there exists $q\in\Sigma$ such that
\begin{equation}\label{eq:normal-gap}
 \nu(p)\cdot\nu(q)\le\frac12,
 \qquad |\nu(p)-\nu(q)|\ge1,
\end{equation}
where $\nu$ is the  outward unit normal of $\Sigma$.
\end{lemma}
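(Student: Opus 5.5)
The plan has three steps: reduce the two conclusions in \eqref{eq:normal-gap} to a single one, test that conclusion against the hypotheses, and then identify what can actually be proved.

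\emph{Reduction.} For unit vectors,
\[
|\nu(p)-\nu(q)|^2=2-2\,\nu(p)\cdot\nu(q),
\]
so $\nu(p)\cdot\nu(q)\le\frac12$ holds exactly when $|\nu(p)-\nu(q)|\ge1$. It therefore suffices to produce, for each $p\in\Sigma$, a point $q$ whose normal makes an angle of at least $\pi/3$ with $\nu(p)$. Write $\Sigma$ as the graph of some $u\in C^2(\R^n)$ over a hyperplane. Then $\nu=(-Du,1)/\sqrt{1+|Du|^2}$, and the question becomes whether the Gauss image $\nu(\Sigma)$ has angular diameter at least $\pi/3$ as seen from every one of its points. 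The natural first attempt is this: since $\Sigma$ is not a hyperplane, $u$ is not affine, so $Du$ is nonconstant, and one would push along a path on which $Du$ changes.

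\emph{Obstacle.} This is where the argument breaks, and I do not believe it can be repaired from the stated hypotheses alone. Take $n=1$ and $u(x)=\varepsilon\sin x$ with $\varepsilon$ small. Its graph is a connected, complete, smooth entire graph and is not a line. Yet every normal lies within angle $\arctan\varepsilon$ of $e_2$, so $\nu(p)\cdot\nu(q)\ge\cos(2\arctan\varepsilon)>\frac12$ for all $p,q$. The same example works in any dimension via $u(x)=\varepsilon\sin x_1$. Hence \eqref{eq:normal-gap} cannot follow from non-flatness plus the graph property. A fixed gap such as $\frac12$ requires extra structure, namely the minimality supplied by Lemma~\ref{lem:calibration}. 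Even with minimality, deriving a fixed gap is essentially a Bernstein-type statement, so it cannot simply be assumed at this stage.

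\emph{What I would prove instead.} The provable version is a non-quantitative gap. If $\Sigma$ is not a hyperplane, then $Du$ is nonconstant, so for each $p$ there is $q$ with $\theta_\Sigma:=|\nu(p)-\nu(q)|>0$. This quantity is invariant under the dilations $x\mapsto\lambda x$ and translations used in the later compactness argument, because normals are unchanged by them. In the contradiction scheme one should fix this positive, scale-invariant separation for each counterexample $u_j$ and normalize by $|p_j-q_j|$. The final step then has to contradict $\theta_j\ge\theta_0>0$. This requires a uniform lower bound $\theta_0$, which must be extracted from minimality through the compactness step itself rather than from Lemma~\ref{lem:gaussgap}. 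I would flag the lemma as stated as needing either this reformulation or an additional hypothesis.
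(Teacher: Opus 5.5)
Your counterexample is correct and exposes a real defect in the statement. Taking $u(x)=\varepsilon\sin x_1$ with $0<\varepsilon<1/\sqrt3$ gives a complete, non-flat entire graph with $\nu(p)\cdot\nu(q)>\frac12$ for all $p,q$. The paper's proof silently uses one more hypothesis. After re-graphing $\Sigma$ over $N^\perp$, it asserts that the height function $v$ solves $\diver(DG(Dv))=0$ for the rotated graph integrand $G$. In other words, it assumes $\Sigma$ is the graph of an entire solution of \eqref{eq:graphpde} for a uniformly elliptic $C^2$ integrand. That is exactly the setting in which the lemma is invoked in the proof of Theorem~\ref{thm:main}. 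The right repair is therefore to add this hypothesis and keep the quantitative conclusion, not to weaken the conclusion.

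The genuine gap in your proposal is the claim that, even with minimality, a fixed gap cannot be obtained at this stage and must come out of the compactness step. Your non-quantitative replacement $|\nu(p)-\nu(q)|>0$ cannot drive the final contradiction, which needs $|\nu_j(0)-\nu_j(q_j)|\ge1$ uniformly in $j$ to contradict uniform convergence of the normals. The missing idea is that the fixed gap is only a bounded-slope (Moser-type) Liouville statement, valid in every dimension:

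\begin{itemize}
\item Let $N=\nu(p)$ and suppose $\nu(x)\cdot N>\frac12$ on all of $\Sigma$. Then $|d\pi_N(\tau)|\ge\frac12|\tau|$ for $\tau\in T\Sigma$.
\item By completeness, $\pi_N\colon\Sigma\to N^\perp$ is a covering of a simply connected space, hence a diffeomorphism. So $\Sigma$ is the graph of some $v$ over $N^\perp$ with $|Dv|<\sqrt3$.
\item By rotation invariance, $v$ solves the anisotropic equation for the rotated integrand $G$. Parametric ellipticity makes $D^2G$ uniformly elliptic and bounded on $\{|p|\le\sqrt3\}$.
\item Hence each $\partial_\alpha v$ is a bounded entire weak solution of $\diver(A(x)Dw)=0$ with $A=D^2G(Dv)$ uniformly elliptic.
\item De Giorgi--Nash oscillation decay, $\osc_{B_r}w\le C(r/R)^\beta\osc_{B_R}w$ with $R\to\infty$, forces $w$ to be constant. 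So $v$ is affine, a contradiction.
\end{itemize}

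The ellipticity constants never enter the conclusion. So the threshold $\frac12$ (indeed any cap of angular radius less than $\pi/2$) holds simultaneously for every $\Phi_j$ in the contradiction sequence, and no compactness is needed.
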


\begin{proof}
Fix $p\in\Sigma$ and set $N=\nu(p)$.  Assume by contradiction that
\begin{equation} \label{eq:hemisphere}
 \nu(x)\cdot N>\frac12\qquad\text{for all }x\in\Sigma.
 \end{equation}
Let $\pi_N:\Sigma\to N^\perp$ be orthogonal projection.  For every
$v\in T_x\Sigma$, write $N=N^T+(N\cdot\nu)\nu$, where $N^T$ is tangent to
$\Sigma$.  Since $d\pi_N(v)=v-(v\cdot N)N$ and
$v\cdot N=v\cdot N^T$, Cauchy--Schwarz gives
\[
 |d\pi_N(v)|^2
 =|v|^2-|v\cdot N^T|^2
 \ge \bigl(1-|N^T|^2\bigr)|v|^2
 =|\nu(x)\cdot N|^2|v|^2.
\]
Consequently
\begin{equation} \label{eq:coplip}
 |d\pi_N(v)|\ge |\nu(x)\cdot N|\,|v|\ge\frac12|v|,
 \end{equation}
 which implies that
 \[ |d\pi_N^{-1}(v)|\le 2|v|.\]

Since \(\Sigma\) is complete, the standard global inversion criterion (see \cite[Theorem~1]{WolfGriffiths1963} )
implies that \(\pi_N:\Sigma\to N^\perp\) is a covering map. As
\(N^\perp\simeq\mathbb R^n\) is simply connected and \(\Sigma\) is
connected, \(\pi_N\) is a global diffeomorphism.



After a coordinate rotation, we may assume
$N=e_{n+1}$, and its normal vector is $(-Dv,1)/\sqrt{1+|Dv|^2}$. Then, the inequality
\eqref{eq:hemisphere} becomes
\[
 \frac1{\sqrt{1+|Dv|^2}}>\frac12.
\]
In particular,
\begin{equation}\label{eq:bounded-slope}
 |Dv|<\sqrt3.
\end{equation}
Let $G(Dv)$ be the graph integrand in these rotated coordinates.  Parametric
ellipticity and \eqref{eq:bounded-slope} imply that $D^2G(Dv)$ is uniformly
elliptic and bounded on $\R^n$.  Up to the
fixed rotation,
\[
 G(p)=\Phi(-p,1),
 \qquad
 D^2G(p)[\zeta,\zeta]
 =D^2\Phi((-p,1))[(-\zeta,0),(-\zeta,0)].
\]
Since the projection of $(-\zeta,0)$ onto $(-p,1)^\perp$ has length at least
$|\zeta|/\sqrt{1+|p|^2}$ and
$D^2\Phi(t\xi)=t^{-1}D^2\Phi(\xi)$, the parametric
ellipticity bounds on the unit sphere therefore give constants
$0<\lambda_1\le\Lambda_1<\infty$ such that
\[
 \lambda_1|\zeta|^2\le D^2G(p)[\zeta,\zeta]
 \le\Lambda_1|\zeta|^2
 \qquad(|p|\le\sqrt3).
\]

For completeness, fix a coordinate direction $e_\alpha$ and $h\ne0$, the difference quotient
$w_{\alpha,h}(x)=(v(x+he_\alpha)-v(x))/h$ satisfies
\[
 \diver(A_{\alpha,h}(x)Dw_{\alpha,h})=0,
\]
where
\[
 A_{\alpha,h}(x)=\int_0^1D^2G\bigl((1-s)Dv(x)+sDv(x+he_\alpha)\bigr)\,ds.
\]
Since
$v\in C^2$, letting $h\to0$ in the weak formulation shows that each
$w_\alpha=\partial_\alpha v$ is a bounded entire weak solution of
\begin{equation}\label{eq:linearized-gradient}
 \diver\bigl(A(x)Dw_\alpha\bigr)=0,
 \qquad A(x):=D^2G(Dv(x)).
\end{equation}
The De Giorgi--Nash oscillation estimate
\cite{GilbargTrudinger1983}
therefore gives, for
$0<r<R/2$,
\begin{equation}\label{eq:osc-decay}
 \osc_{B_r}w_\alpha
 \le C\left(\frac rR\right)^\beta\osc_{B_R}w_\alpha,
\end{equation}
where $C$ and $\beta>0$ depend only on $n,\lambda_1$ and $\Lambda_1$.
Letting
$R\to\infty$ with $r$ fixed yields
$\osc_{B_r}w_\alpha=0$.  The arbitrariness of $r$ implies that $w_\alpha$ is constant, hence $v$ is affine and $\Sigma$ is a hyperplane, which is a contradiction with the hypothesis.  Therefore, there exists some $q$ satisfies
$\nu(p)\cdot\nu(q)\le1/2$, and the  inequality
\eqref{eq:normal-gap} follows from
\[
 |\nu(p)-\nu(q)|^2=2-2\nu(p)\cdot\nu(q)\ge1.
\]
\end{proof}

\subsection{Compactness}
\label{subsec:varying-anisotropies}

We now use a standard compactness statement to claim that the $\Phi_j$--minimizers  convergence to a nontrivial global Euclidean perimeter minimizer.


For every oriented rectifiable \(n\)-current \(S\), we write
$S=\vec S\,\|S\|,$
meaning that
\[
S(\omega)
=
\int \langle\omega,\vec S\rangle\,d\|S\|
\]
for every compactly supported smooth \(n\)-form \(\omega\).

\begin{lemma}[Compactness]\label{lem:compactness}
Let $\Psi_j$ be positive convex one-homogeneous integrands such that
\[
 \Psi_j\longrightarrow |\cdot|
 \quad\text{uniformly on }\Sph^n,
 \tag{11}\label{eq:uniform-integrand}
\]
with uniform ellipticity and uniform $C^{1,1}(\Sph^n)$ bounds.  Let $E_j$ be
global $\Psi_j$-minimizers and suppose that $x_j\in\partial E_j$ belongs to a
fixed compact set.  Then, up to a subsequence, we have
\begin{equation}\label{eq:l1conv}
x_j\to x_{\infty} \quad\text{and}\quad \chi_{E_j}\to\chi_E\quad\text{in }L^1_{\mathrm{loc}}(\R^{n+1}),
\end{equation}
where $x_{\infty}\in\partial E$ and $E$ is a global Euclidean perimeter minimizer.

If $B_{2R}(x)\Subset\R^{n+1}$ and $E$ is a halfspace in $B_{2R}(x)$, then, up to a further subsequence,  we can choose $r\in(R,2R)$ such that
\begin{equation}\label{eq:strict-perimeter}
 \Per(E_j;B_r(x))\longrightarrow\Per(E;B_r(x)).
\end{equation}
Consequently, the oriented boundary currents converge strictly in $B_r(x)$,
and for every $\eta\in C_c(B_r(x))$,
\begin{equation}\label{eq:tiltconv}
 \int\eta\,|\vec T_j-\vec T|^2\,d\|T_j\|\longrightarrow0,
\end{equation}
where $T_j=\partial\llbracketbracket{E_j}$ and  $T=\partial\llbracketbracket{E}$.
\end{lemma}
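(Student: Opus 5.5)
We will run the standard compactness argument for almost-minimizers, using the uniform comparability of $\Psi_j$ with the Euclidean norm. Since $\Psi_j\to|\cdot|$ uniformly on $\Sph^n$, there are $\varepsilon_j\to0$ with $(1-\varepsilon_j)|\xi|\le\Psi_j(\xi)\le(1+\varepsilon_j)|\xi|$.

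\textbf{Step 1: compactness and limit minimality.} Comparing $E_j$ with $E_j\cup B_\rho(y)$ and $E_j\setminus B_\rho(y)$ gives uniform local perimeter bounds $\Per(E_j;B_\rho(y))\le C\rho^n$. The BV compactness theorem then yields $\chi_{E_j}\to\chi_E$ in $L^1_{\mathrm{loc}}$ after a diagonal subsequence. For minimality of $E$, fix a competitor $F$ with $F\triangle E\Subset B_\rho(y)$. Choose a radius $s\in(\rho,2\rho)$ for which $\Haus^n(\partial B_s\cap(E_j\triangle E))\to0$; this is possible by Fubini and the $L^1$ convergence. Glue $F_j=(F\cap B_s)\cup(E_j\setminus B_s)$. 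Then $\Psi_j$-minimality of $E_j$ and the comparability give
\[
 (1-\varepsilon_j)\Per(E_j;B_s)\le(1+\varepsilon_j)\bigl(\Per(F;B_s)+\Haus^n(\partial B_s\cap(E_j\triangle E))\bigr).
\]
Lower semicontinuity then gives $\Per(E;B_s)\le\Per(F;B_s)$. Taking $F=E$ in the same argument shows that the perimeters converge, i.e.\ $\Per(E_j;B_s)\to\Per(E;B_s)$ for a.e.\ admissible $s$.

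\textbf{Step 2: boundary points.} By the density estimates, $x_\infty\in\partial E$. Uniform ellipticity makes each $E_j$ a $\Psi_j$-minimizer with constants independent of $j$, so we get uniform lower volume density bounds $|E_j\cap B_\rho(x_j)|\ge c\rho^{n+1}$ and $|B_\rho(x_j)\setminus E_j|\ge c\rho^{n+1}$ for all $\rho$. These pass to the $L^1$ limit, so both $E$ and its complement have positive density in every ball around $x_\infty$. Hence $x_\infty\in\partial E$, using the normalization that $\partial E$ is the topological boundary of the measure-theoretic representative.

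\textbf{Step 3: strict convergence and tilt.} The halfspace assumption is not actually needed for \eqref{eq:strict-perimeter}: Step 1 already gives convergence of perimeters for a.e.\ $r\in(R,2R)$. We need only pick $r$ also satisfying $\Per(E;\partial B_r(x))=0$. Given strict convergence $\|T_j\|(B_r)\to\|T\|(B_r)$ together with $T_j\rightharpoonup T$, the De Giorgi/Reshetnyak argument applies. Since $|\vec T_j-\vec T|^2=2-2\langle\vec T_j,\vec T\rangle$ and $\vec T$ is constant on the halfspace (this is where the halfspace hypothesis makes the argument clean), we have
\[
 \int\eta|\vec T_j-\vec T|^2d\|T_j\|=2\int\eta\,d\|T_j\|-2\,T_j(\eta\,\vec T^{\flat}).
\]
This tends to $2\int\eta\,d\|T\|-2T(\eta\vec T^\flat)=0$. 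The first term converges by strict convergence and the weak-$*$ convergence $\|T_j\|\rightharpoonup\|T\|$ in $B_r$; the second converges by current convergence, since $\eta\vec T^\flat$ is a continuous compactly supported form.

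\textbf{Main obstacle.} The main obstacle is the slicing/gluing estimate in Step 1. We must choose $s$ so that the error on $\partial B_s$ vanishes, and we must check that the constructed competitor $F_j$ satisfies $F_j\triangle E_j\Subset B_{2\rho}$, so that $\Psi_j$-minimality of $E_j$ applies. For this we will use the standard identity $\Per(F_j;B_{2\rho})\le\Per(F;B_s)+\Per(E_j;B_{2\rho}\setminus\overline B_s)+\Haus^n(\partial B_s\cap(E_j\triangle F))$, which holds for a.e.\ $s$.
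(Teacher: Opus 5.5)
Your proposal is correct and follows essentially the paper's route: uniform comparability with the Euclidean norm, density and perimeter bounds from ball comparisons, $BV$ compactness, and a cut-and-paste along a well-chosen slice. The differences are minor: you glue along spheres where the paper uses level sets of a cutoff for the minimality step, and the paper's strict-convergence competitor $(E\cap B_r)\cup(E_j\setminus\overline{B_r})$ is exactly your $F=E$ case, after which both arguments finish with the same Reshetnyak/tilt computation using the constant orientation of the limiting halfspace. Your remark that the halfspace hypothesis is needed only for the tilt identity, and not for the strict perimeter convergence, is also right, since $\Per_{\Psi_j}(E;B_r)\to\Per(E;B_r)$ holds for any $E$ by uniform convergence of $\Psi_j$.
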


\begin{proof}
Uniform convergence to the Euclidean norm implies that there exist $\varepsilon_j\downarrow0$ such that for all $\xi\in\R^{n+1}$,
\begin{equation}\label{eq:elliptic-comparison}
 (1-\varepsilon_j)|\xi|
 \le\Psi_j(\xi)\le(1+\varepsilon_j)|\xi|.
\end{equation}
Therefore, there exists some $j_0$ and some constants
$0<\lambda_0\le\Lambda_0<\infty$ such that for any $j\ge j_0$ and $\ \xi\in\R^{n+1}$, we have
\[
 \lambda_0|\xi|\le\Psi_j(\xi)\le\Lambda_0|\xi|.
\]
We now prove the uniform density estimates.  Fix
$y\in\partial E_j$ and set
\[
 V_j(s)=|E_j\cap B_s(y)|,
 \qquad W_j(s)=|B_s(y)\setminus E_j|.
\]
For almost every $s$, the   coarea formula yields
\[
 V_j'(s)=\Haus^n(E_j^{(1)}\cap\partial B_s(y)),
 \qquad
 W_j'(s)=\Haus^n(E_j^{(0)}\cap\partial B_s(y)).
\]
Let $F := E_j \setminus \overline{B_s(y)}$, then \(E_j \triangle F \subset B_{s+\delta}(y)\). By the minimality of \(E_j\) and the fact that $E_j=F$ on $B_{s+\delta}\setminus \overline{B_{s}}$, we obtain
\[
P_{\Psi_j}(E_j; B_s(y)) \leq P_{\Psi_j}(F; B_s(y))= \int_{E_j^{(1)} \cap \partial B_s(y)} \Psi_j(-\nu_{B_s}) \, d\mathcal{H}^n\le\Lambda_0V_j'(s).
\]
Similarly, by setting $F:=E_j\cup B_s(y)$, we get
\[
 \Per_{\Psi_j}(E_j;B_s(y))
 \le\int_{E_j^{(0)}\cap\partial B_s(y)}
        \Psi_j(\nu_{B_s})\,d\Haus^n
 \le\Lambda_0W_j'(s).
\]
Then, it follows from $\lambda_0|\xi|\leq\Psi_j(\xi)\leq\Lambda_0|\xi|$ that
\begin{equation}\label{eq:ball-comparison}
 \Per(E_j;B_s(y))
 \le\frac{\Lambda_0}{\lambda_0}V_j'(s),
 \quad\text{and}\quad
 \Per(E_j;B_s(y))
 \le\frac{\Lambda_0}{\lambda_0}W_j'(s).
\end{equation}
The intersection formula for perimeter now gives
\[
 \begin{aligned}
 \Per(E_j\cap B_s(y);\R^{n+1})
 &\le\Per(E_j;B_s(y))+V_j'(s)\\
 &\le\Bigl(1+\frac{\Lambda_0}{\lambda_0}\Bigr)V_j'(s).
 \end{aligned}
\]
By the Euclidean isoperimetric inequality,
\[
 V_j(s)^{(n)/(n+1)}\le C V_j'(s)
 \quad\text{for a.e. }s.
\]
Since \(y\in\operatorname{spt}|D\chi_{E_j}|\), both $V_j$ and $W_j$  have positive measure in every ball centered at \(y\). Integrating $\frac{d}{ds} V_j(s)^{\frac{1}{n+1}} \ge c $
from \(0\) to \(s\) yields
\[
V_j(s) \ge c_0 s^{n+1}.
\]
Similarly, applying the same argument to \(W_j\) with the second inequality in \eqref{eq:ball-comparison}, gives
\begin{equation}\label{eq:density}
 \min\bigl\{|E_j\cap B_s(y)|,
 |B_s(y)\setminus E_j|\bigr\}\ge c_0s^{n+1}.
\end{equation}
Combining the relative isoperimetric inequality in  $B_s(y)$ with \eqref{eq:density}, we obtain the lower perimeter density
\[
 c_1s^n\le\Per(E_j;B_s(y)).
\]
Finally, by the mean value theorem for integrals, there exists
$s\in(\rho,2\rho)$ such that $V_j'(s)\le C_n\rho^n$.  Since \(B_\rho(y)\subset B_s(y)\), \eqref{eq:ball-comparison} gives
\begin{equation}\label{eq:perimeter-upper}
 \Per(E_j;B_\rho(y))\le C_0\rho^n,
\end{equation}
where the constants are independent of $j,y,$ and $\rho$.

It follows from \eqref{eq:perimeter-upper} that the $BV$ norms of $E_j$ are locally uniformly bounded.  By $BV$ compactness \cite{AFP2000}, after passing to a diagonal subsequence, we obtain  \eqref{eq:l1conv} and
$D\chi_{E_j}\stackrel*\rightharpoonup D\chi_E$ locally.   By \eqref{eq:density} and the inclusions \(B_{s/2}(x_j)\subset B_s(x_\infty)\), both \(E\) and $E^c$ have positive measure in every ball centered at \(x_\infty\), which implies that \(x_\infty\in\partial E\).

\medskip
We now show that the limit $E$ is  a global Euclidean perimeter minimizer. Let $F\triangle E\Subset U$.  The case
$\Per(F;U)=\infty$ is trivial, so we assume $\Per(F;U)<\infty$.  Choose
$g\in C_c^\infty(U)$, $0\le g\le1$ such that $g=1$ on a neighborhood of
$\supp(\chi_F-\chi_E)$, and set $V_t=\{g>t\}$ for $0<t<1$.  Then
$F=E$ in a neighborhood of $\partial V_t$. Up to a further subsequence, we have
\[
 \sum_j\|\chi_{E_j}-\chi_E\|_{L^1(\{0<g<1\})}<\infty.
\]
It follows from coarea formula that
\[
 \int_0^1\int_{\partial V_t}
 |\Tr\chi_{E_j}-\Tr\chi_E|\,d\Haus^n\,dt
 \le\|\nabla g\|_\infty
    \|\chi_{E_j}-\chi_E\|_{L^1(\{0<g<1\})}.
\]
Summing in \(j\) and applying Fubini's theorem, we may fix  $t$, independent of $j$, such that for $V=V_t$,
\begin{equation}\label{eq:trace-convergence}
 \int_{\partial V}|\Tr\chi_{E_j}-\Tr\chi_E|\,d\Haus^n
 \longrightarrow0.
\end{equation}
From the standard Lipschitz-boundary formula \cite{AFP2000}, we also choose \(t\) such that  \(|D\chi_{E_j}|(\partial V_t)=|D\chi_E|(\partial V_t)=|D\chi_F|(\partial V_t)=0\).

Define
\[
 F_j=(F\cap V)\cup(E_j\setminus\overline V).
\]
Let $W$ be such that $\overline V\Subset W\Subset U$.  It follows form the minimality of
$E_j$ in $W$  and the uniform bound \(\Psi_j\le \Lambda_0|\cdot|\),
\begin{equation}\label{eq:gluing-limit}
 \Per_{\Psi_j}(E_j;V)
 \le \Per_{\Psi_j}(F;V)
 +\Lambda_0\int_{\partial V}
  |\Tr\chi_{E_j}-\Tr\chi_F|\,d\Haus^n.
\end{equation}
Since \(F=E\) near \(\partial V\), the trace term vanishes as \(j\to\infty\) by \eqref{eq:trace-convergence}. Uniform convergence of \(\Psi_j\) yields \(\Per_{\Psi_j}(F;V)\to\Per(F;V)\), while lower semicontinuity and \eqref{eq:elliptic-comparison} imply
\eqref{eq:elliptic-comparison} give
\[
 \begin{aligned}
 \Per(E;V)
 &\le\liminf_j\Per(E_j;V)\\
 &\le\liminf_j\frac{\Per_{\Psi_j}(E_j;V)}{1-\varepsilon_j}\\
 &\le\limsup_j\frac{\Per_{\Psi_j}(E_j;V)}{1-\varepsilon_j}\\
 &\le\Per(F;V),
 \end{aligned}
\]
where we used \eqref{eq:gluing-limit} in the last inequality. Since \(E\) and \(F\) agree outside \(V\), we obtain
$\Per(E;U)\le\Per(F;U)$. Thus $E$ is a global Euclidean perimeter
minimizer.

\medskip
It remains to prove strict convergence when $E$ is  locally a halfspace.
Define
\[
 Q_j:=\llbracketbracket{E_j}-\llbracketbracket{E}\quad\text{and}\quad d_x(z)=|z-x|.
\]
By local $L^1$ convergence,
\[
 \Mass(Q_j\llcorner B_{2R}(x))
 =\int_{B_{2R}(x)}|\chi_{E_j}-\chi_E|\,dz\longrightarrow0.
\]
Passing to a subsequence so that \(\sum_j \mathbf{M}(Q_j\llcorner B_{2R}(x))<\infty\), the slicing inequality
\cite{Maggi2012} yields
\[
 \int_R^{2R}\Mass(\langle Q_j,d_x,r\rangle)\,dr
 \le\Mass(Q_j\llcorner(B_{2R}(x)\setminus B_R(x))),
\]
where  \(\langle Q_j,d_x,r\rangle\) is the slice of \(Q_j\) on \(\{d_x=r\}=\partial B_r(x)\).
Choosing \(r\in(R,2R)\) with \(|D\chi_{E_j}|(\partial B_r(x))=|D\chi_E|(\partial B_r(x))=0\), we have
\begin{equation}\label{eq:slice-small}
 \Mass(\langle Q_j,d_x,r\rangle)\longrightarrow0.
\end{equation}

Let
\[
 F_j=(E\cap B_r(x))\cup(E_j\setminus\overline{B_r(x)}).
\]
Then
\[
 \llbracketbracket{F_j}
 =\llbracketbracket{E_j}-Q_j\llcorner B_r(x).
\]
Taking boundaries and using the slicing identity $ \langle Q_j,d_x,r\rangle
 =\partial(Q_j\llcorner B_r(x))-(\partial Q_j)\llcorner B_r(x)$, we obtain
\[\begin{split}
 \partial\llbracketbracket{F_j}
 &=  T_j - \bigl((T_j - T) \llcorner B_r - \langle Q_j, d_x, r \rangle\bigr)\\
 &=  T_j - T_j \llcorner B_r + T \llcorner B_r - \langle Q_j, d_x, r \rangle \\
&=T\llcorner B_r(x)
  +T_j\llcorner(\R^m\setminus\overline{B_r(x)})
  -\langle Q_j,d_x,r\rangle.
  \end{split}\]
By minimality of \(E_j\) in \(B_s(x)\), and the fact that \(E_j=F_j\) on \(B_s\setminus\overline{B_r}\), we get
\begin{equation}\label{eq:strict-gluing}
P_{\Psi_j}(E_j;B_r)\le P_{\Psi_j}(F_j;B_r)\le\Per_{\Psi_j}(E;B_r(x))
 +\Lambda_0\Mass(\langle Q_j,d_x,r\rangle).
\end{equation}

The limiting normal is constant because $E$ coincides with a halfspace in
$B_{2R}(x)$, and hence
$\Per_{\Psi_j}(E;B_r(x))\to\Per(E;B_r(x))$. Therefore, combining this with \eqref{eq:slice-small} and \eqref{eq:strict-gluing}, we obtain
\[
 \limsup_j\Per_{\Psi_j}(E_j;B_r(x))\le\Per(E;B_r(x)).
\]
Using \eqref{eq:elliptic-comparison}  and the lower
semicontinuity for the perimeter gives
\[
 \Per(E_j;B_r(x))\longrightarrow\Per(E;B_r(x)),
\]
which is \eqref{eq:strict-perimeter}.  Together with the $L^1$ convergence,
this  gives strict $BV$ convergence in $B_r(x)$, i.e.
\[
 D\chi_{E_j}\stackrel*\rightharpoonup D\chi_E,
 \qquad |D\chi_{E_j}|(B_r(x))\to|D\chi_E|(B_r(x)),
\]
and equivalently
\[
 T_j\rightharpoonup T,
 \qquad \|T_j\|(B_r(x))\to\|T\|(B_r(x)).
\]
By Reshetnyak's continuity theorem
\cite{AFP2000}, we also have
\begin{equation}\label{eq:weighted-mass}
 \int\eta\,d\|T_j\|\longrightarrow\int\eta\,d\|T\|
\end{equation}
for any $\eta\in C_c(B_r(x))$.

Since \(E\) coincides with a halfspace in \(B_{2R}(x)\),
\(\vec T\) is the constant oriented unit tangent \(n\)-vector of the
limiting hyperplane. Let $\omega_{\vec T}$ be the dual constant $n$-covector of $\vec T$.  For every
nonnegative $\eta\in C_c(B_r(x))$,
\[
 \begin{aligned}
 \int\eta|\vec T_j-\vec T|^2\,d\|T_j\|
 &=2\int\eta\,d\|T_j\|-2T_j(\eta\omega_{\vec T})\\
 &\longrightarrow
   2\int\eta\,d\|T\|-2T(\eta\omega_{\vec T})=0.
 \end{aligned}
\]  For signed $\eta$, the same conclusion follows by
\[
 \left|\int\eta|\vec T_j-\vec T|^2\,d\|T_j\|\right|
 \le\int|\eta||\vec T_j-\vec T|^2\,d\|T_j\|\longrightarrow0.
\]
\end{proof}

\begin{lemma}\label{lem:directedclosed}
Assume $\chi_{E_j}\to\chi_E$ in $L^1_{\mathrm{loc}}$, $a_j\to a\in\Sph^n$,
and
\begin{equation}\label{eq:direction-j}
 \chi_{E_j}(x+ta_j)\ge\chi_{E_j}(x)
 \quad\text{for a.e. }x\text{ and every }t>0.
\end{equation}
Then the same inequality holds for $E$ in direction $a$.
\end{lemma}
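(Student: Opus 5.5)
We reformulate the monotonicity in weak form and pass to the limit by testing against compactly supported functions. Fix $t>0$. The hypothesis \eqref{eq:direction-j} says $\chi_{E_j}(\cdot+ta_j)-\chi_{E_j}\ge0$ a.e. This is equivalent to
\[
 \int_{\R^{n+1}}\bigl(\chi_{E_j}(x+ta_j)-\chi_{E_j}(x)\bigr)\psi(x)\,dx\ge0
 \qquad\text{for all } \psi\in C_c(\R^{n+1}),\ \psi\ge0 .
\]
It therefore suffices to show that, for each such $\psi$, the left-hand side converges to
\[
 \int\bigl(\chi_{E}(x+ta)-\chi_{E}(x)\bigr)\psi(x)\,dx .
\]
Once this holds for every $t>0$, we conclude $\chi_E(x+ta)\ge\chi_E(x)$ for a.e. $x$, for each fixed $t$. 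This is exactly the statement for $E$ in direction $a$.

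The term $\int\chi_{E_j}\psi\to\int\chi_E\psi$ follows directly from $L^1_{\mathrm{loc}}$ convergence on $\supp\psi$. For the translated term, change variables $y=x+ta_j$ to get
\[
 \int\chi_{E_j}(y)\,\psi(y-ta_j)\,dy .
\]
We then split
\[
 \int\chi_{E_j}(y)\psi(y-ta_j)\,dy-\int\chi_E(y)\psi(y-ta)\,dy
 =\int(\chi_{E_j}-\chi_E)(y)\,\psi(y-ta_j)\,dy
 +\int\chi_E(y)\bigl(\psi(y-ta_j)-\psi(y-ta)\bigr)\,dy .
\]
Since $|a_j|$ is bounded, the supports of $\psi(\cdot-ta_j)$ all lie in a fixed compact set $K$. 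The first term is therefore bounded by $\|\psi\|_\infty\|\chi_{E_j}-\chi_E\|_{L^1(K)}\to0$. The second term tends to $0$ because $\psi$ is uniformly continuous and compactly supported and $ta_j\to ta$, so $\psi(\cdot-ta_j)\to\psi(\cdot-ta)$ uniformly on $K$. Changing variables back gives the claimed limit.

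The only delicate point is the quantifier "for a.e. $x$ and every $t>0$". The argument above gives, for each fixed $t$, an exceptional null set $N_t$ that may depend on $t$. This is the same form as the hypothesis read literally, so the statement holds as intended. If a $t$-uniform version is needed, we would first apply the argument to rational $t$, collecting a single null set $\bigcup_{t\in\mathbb{Q}_{>0}}N_t$. We would then pass to a representative of $E$, for instance the set of density-one points, on which monotonicity along rational translations extends to all real $t>0$ by approximation. This last upgrade is the only step needing any care; the limit passage itself is routine.
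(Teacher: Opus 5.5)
Your proof is correct and essentially the paper's argument. The paper makes the same two-term split to show $\chi_{E_j}(\cdot+ta_j)\to\chi_E(\cdot+ta)$ strongly in $L^1_{\mathrm{loc}}$, using $L^1$-continuity of translations of $\chi_E$; you instead move the translation onto a continuous test function $\psi$ and use its uniform continuity. Your closing upgrade is harmless but more than needed: for the density-one representative $E^{(1)}$, the fixed-$t$ a.e.\ inclusion already gives $E^{(1)}+ta\subset E^{(1)}$ for every real $t>0$, with no rational approximation.
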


\begin{proof}
Fix $t>0$ and a compact set $K$.  For a slightly larger compact set $K'\supset K$,
the triangle inequality gives
\[
 \|\chi_{E_j}(\,\cdot+ta_j)-\chi_E(\,\cdot+ta)\|_{L^1(K)}
 \le \|\chi_{E_j}-\chi_E\|_{L^1(K')}
 +\|\chi_E(\,\cdot+ta_j)-\chi_E(\,\cdot+ta)\|_{L^1(K)}.
\]
By local convergence and translation continuity in \(L^1_{\text{loc}}\), passing to the limit \(j\to\infty\) gives the desired inequality.
\end{proof}

\section{Rigidity of Directed Euclidean Minimizers}
\label{sec:directed-rigidity}
In this section, we claim that, for \(n\le7\), every nontrivial directed Euclidean perimeter minimizer is a halfspace.

\begin{proposition}[Classification of directed Euclidean minimizers]
\label{prop:directed-classification}
Let $1\le n\le7$.  Suppose that $E\subset\R^{n+1}$ is a nontrivial global
Euclidean perimeter minimizer and, for some $a\in\Sph^n$,
\[
 \chi_E(x+ta)\ge\chi_E(x)
 \quad\text{for a.e. }x\text{ and every }t>0.
 \tag{24}\label{eq:directed-E}
\]
Then $E$ is a halfspace.  The direction $a$ is allowed to be tangent to its
boundary.
\end{proposition}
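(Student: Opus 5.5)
Our plan is to reduce the statement to two classical facts: the classification of minimizing cones via Simons' theorem, and the monotonicity formula for Euclidean perimeter minimizers. We first take a point $x_0\in\partial E$ (it exists because $E$ is nontrivial) and translate so that $x_0=0$. We then consider the blow-downs $E_\lambda:=\lambda^{-1}E$ with $\lambda\to\infty$. These are global Euclidean minimizers, and the perimeter bound $\Per(E;B_\rho)\le C\rho^{n}$ follows from the comparison argument in Lemma~\ref{lem:compactness} applied with $\Psi_j=|\cdot|$. Lemma~\ref{lem:compactness} therefore gives a subsequence converging in $L^1_{\mathrm{loc}}$ to a global minimizer $C$ with $0\in\partial C$. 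The monotonicity formula shows that $\rho^{-n}\Per(E;B_\rho)$ is nondecreasing with a finite limit $\Theta_\infty$. Using the standard convergence of perimeters for minimizers (De Giorgi), the density ratios of $C$ are constantly $\Theta_\infty$, so $C$ is a cone. The directedness condition \eqref{eq:directed-E} is invariant under dilations, so Lemma~\ref{lem:directedclosed} (with $a_j=a$) transfers it to $C$.

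Next we classify directed minimizing cones. We first argue that $\partial C$ is smooth away from $0$. For $n\le 6$ this is automatic, since the singular set of a minimizer has dimension at most $n-7$. For $n=7$ the singular set of $C$ is at most the vertex, and we want to rule out a genuine singular cone; this is where directedness enters. On the regular part, \eqref{eq:directed-E} forces $\nu_C\cdot a\ge 0$, with the sign convention that $C$ is increasing in direction $a$. The function $w=\nu_C\cdot a$ is a nonnegative Jacobi field, $\Delta_{\partial C}w+|A|^2w=0$. By the strong maximum principle on the connected regular part (connected because the singular set has codimension at least $2$), either $w\equiv0$ or $w>0$. If $w>0$, a positive Jacobi field exists, so $\partial C$ is strictly stable and is locally a graph over $a^\perp$. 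In this case we would rather argue directly: the $\partial C$ is then a minimizing graph-like cone in direction $a$, and a positive supersolution on the link, combined with the homogeneity of $w$ of degree $0$, gives a contradiction with the first eigenvalue of the link unless $|A|\equiv0$. Concretely, $w$ is degree-$0$ homogeneous, so on the link $\Sigma=\partial C\cap\Sph^n$ it satisfies $\Delta_\Sigma w+|A|^2w=0$ with $w>0$. Integrating against $w$ itself, or using that $\lambda_1(-\Delta_\Sigma-|A|^2)=0$ while Simons' inequality for minimal cones forces $\lambda_1\le -(n-1)$ unless totally geodesic, yields $|A|\equiv0$. If instead $w\equiv0$, then $C=C'\times\R a$ splits, with $C'$ a minimizing cone in one dimension lower. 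Iterating, or simply invoking Simons' classification in dimensions $\le 7$ for the lower cone, shows that $C'$ is a halfspace. In either case $C$ is a halfspace, so $\Theta_\infty=\omega_n$.

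Finally we transfer the conclusion back to $E$. By monotonicity, $\omega_n\le\rho^{-n}\Per(E;B_\rho(0))\le\Theta_\infty=\omega_n$ for all $\rho$, where the lower bound is the density at a boundary point. Equality in the monotonicity formula makes $E$ a cone with vertex $0$ and density $\omega_n$, and Allard/De Giorgi regularity (density one) makes it a halfspace.

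The main obstacle is the $n=7$ singular-cone case in the middle step, where Simons' theorem alone does not apply. There we must make the Jacobi-field argument rigorous across the isolated singularity, via cut-offs near the vertex, which are admissible since the singular set has zero capacity.
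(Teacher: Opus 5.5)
Your route is essentially the paper's, in a different order: you blow down first and then classify the cone, while the paper classifies directed cones first. The ingredients match: the nonnegative Jacobi field $w=\nu_C\cdot a$ on the link, the dichotomy $w>0$ (forcing $|A|\equiv0$) or $w\equiv0$ (forcing $C=C'\times\R a$ with $C'$ a smooth, hence flat, minimizing cone), and the equality case of monotonicity to return to $E$. One step, however, fails as justified: connectedness of the regular part for $n=7$.

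Because $C$ is a cone, $\partial C\setminus\{0\}$ is connected if and only if the link $\Gamma=\partial C\cap\Sph^7$ is connected. A codimension count cannot give this, since $\partial C$ is not a manifold at the vertex. For a cone over a disconnected smooth link, the singular set is a single point yet the regular part is disconnected. What excludes this is minimality of the link, not codimension.

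Without connectedness, the maximum principle gives the dichotomy only componentwise. The mixed case ($w>0$ on one component, $w\equiv0$ on another) is then not excluded. The missing ingredient is Frankel's theorem that two closed minimal hypersurfaces of $\Sph^7$ must intersect. The paper proves exactly this with the second-variation argument along a minimizing geodesic joining two components, where parallel transport gives $\sum_{i=1}^6 I(V_i,V_i)=-6\ell<0$. Ilmanen's strong maximum principle for stationary varifolds would also do, but some such input is required.

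Two smaller points. First, \emph{integrating against $w$ itself} only yields $\int_\Gamma|\nabla w|^2=\int_\Gamma|A_\Gamma|^2w^2$, which proves nothing. Integrating the equation against the constant $1$ gives $\int_\Gamma|A_\Gamma|^2w=0$ directly. That is the paper's one-line argument, and it makes Simons' bound $\lambda_1(-\Delta_\Gamma-|A_\Gamma|^2)\le-(n-1)$ unnecessary, though your alternative via that bound is valid. Second, the cut-off issue you flag as the main obstacle does not arise. Since $w$ is $0$-homogeneous and the singular set is at most the vertex, the Jacobi equation holds classically on the smooth closed manifold $\Gamma$, so no capacity argument near the vertex is needed.
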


\begin{proof}
We first classify directed minimizing cones. Let \(C\) be a nontrivial perimeter-minimizing cone with vertex at the origin satisfying \eqref{eq:directed-E}.

For \(n\le6\), the regularity theorem \cite{Maggi2012} implies \(0\) is regular, De Giorgi's blow-up then forces \(C\) itself to be a halfspace, without using the condition \eqref{eq:directed-E}.

Now let $n=7$. By
\cite{Maggi2012}, the
singular set of a perimeter-minimizing boundary is locally discrete.  Thus the only possible singular point of the cone \(C\) is the origin and
\[
 \Gamma:=\partial C\cap\Sph^7
\]
is a smooth closed embedded minimal hypersurface of $\Sph^7$. Moreover, $\Gamma$ is minimal in
the sphere. Indeed, For any smooth normal variation \(Y\) of \(\Gamma\) and any \(0<\eta\in C_c^1(0,\infty)\), set $V(r\theta):=r\eta(r)Y(\theta)$, where \(r>0\) and \(\theta\in\mathbb S^7\).
 The first variation of \(C\) in direction \(V\) is
\[
\delta_V P(C)=\left(\int_0^\infty \eta(r)r^6\,dr\right)\delta_Y P(\Gamma).
\]
Since \(C\) is stationary and $r>0$, \(\delta_Y P(\Gamma)=0\) for all \(Y\). Hence \(\Gamma\) is minimal in \(\mathbb S^7\).

It remains to show that \(\Gamma\) is connected. Otherwise, suppose that \(\Gamma=\Gamma_0\cup\Gamma_1\) has two components. Let \(\gamma:[0,\ell]\to\mathbb S^7\) be a minimizing geodesic  joining  $\Gamma_0$ and $\Gamma_1$ with $\|\dot\gamma(t)\|=1$ for all $t$, and meeting both orthogonally. Let \(\{V_i\}_{i=1}^6\) be the parallel transport along $\gamma$ of an orthonormal basis of \(T_{\gamma(0)}\Gamma_0\). Then \(\{V_i(\ell)\}\) is an orthonormal basis of \(T_{\gamma(\ell)}\Gamma_1\) and  the second variation  \(I(V_i,V_i)\ge0\). Summing over \(i\) and using the minimality of $\Gamma_0$ and $\Gamma_1$ gives
\[
0\le \sum_{i=1}^6 I(V_i,V_i)
= -\sum_{i=1}^6\int_0^\ell \langle R(V_i,\dot\gamma)\dot\gamma,V_i\rangle\,ds
= -6\ell<0,
\]
a contradiction.

By \eqref{eq:directed-E}, for any \(\zeta\ge0\),
\[
\langle D_a\chi_C,\zeta\rangle
=\lim_{h\downarrow0}\int_{\R^8}
  \frac{\chi_C(x+ha)-\chi_C(x)}h\,\zeta(x)\,dx\ge0.
\]
Let \(\sigma:=D\chi_C/|D\chi_C|\). Since \(D_a\chi_C=(\sigma\cdot a)|D\chi_C|\) is a nonnegative measure, we have
\[
w:=\sigma\cdot a\ge0
\]
\(|D\chi_C|\)-almost everywhere on \(\partial C\). Since translations preserve Euclidean mean curvature, differentiating the mean curvature of \((\partial C\setminus\{0\})+sa\) at \(s=0\) shows that the normal component of \(a\), namely \(w:=\sigma\cdot a\), satisfies the Jacobi equation. Therefore,
\begin{equation}\label{eq:cone-jacobi}
 (\Delta_C+|A_C|^2)w=0,
\end{equation}
where $w$ is zero-homogeneous.  On the cone metric
$dr^2+r^2g_\Gamma$,
\[
 \Delta_Cw=r^{-2}\Delta_\Gamma w,
 \qquad |A_C|^2=r^{-2}|A_\Gamma|^2.
\]
Thus
\begin{equation}\label{eq:link-jacobi}
 \Delta_\Gamma w+|A_\Gamma|^2w=0.
\end{equation}

Since $\Gamma$ is connected and   $|A_\Gamma|^2$ is smooth,
the Harnack inequality for nonnegative solutions of
\eqref{eq:link-jacobi} indicates that either $w>0$ or
$w\equiv0$.  If $w>0$, integrating over
$\Gamma$ yields
\[
 0=\int_\Gamma\Delta_\Gamma w
   +\int_\Gamma|A_\Gamma|^2w
  =\int_\Gamma|A_\Gamma|^2w.
\]
Hence $A_\Gamma\equiv0$ and $C$ is a halfspace.

If \(w\equiv0\), then \(D_a\chi_C=0\) distributionally. By the BV slicing theorem, for a.e. \(y\in a^\perp\), the function \(t\mapsto\chi_C(y+ta)\) is constant a.e. on \(\mathbb R\). Hence
\[
C=C'\times\mathbb R a  \]
up to sets of measure zero,
 where \(C'\subset a^\perp\simeq\mathbb R^7\) is a cone. We verify that \(C'\) is perimeter-minimizing.  Otherwise there would exist  $U'\Subset a^\perp$,
$F'\mathbin\triangle C'\Subset U'$  and $\varepsilon>0$ such that
\[
 \Per(F';U')\le\Per(C';U')-\varepsilon
\]
with \(F'=C'\) near \(\partial U'\). Define \[D := F'\times(-L,L) \cup (C\setminus (U'\times(-L,L))).\] By the product and gluing formulae,
\[
P(D)-P(C)
\le -2L\varepsilon + 2|F'\triangle C'|.
\]
For \(L\) large, the right-hand side is negative, contradicting the minimality of \(C\). Hence \(C'\) is perimeter-minimizing. By the regularity theorem for perimeter minimizers \cite{Maggi2012}, the vertex of \(C'\) is regular, so \(C'\) is a halfspace.  Therefore $C$ is a halfspace.

It remains to show that $E$ is a halfspace.  Since $E$ is nontrivial,
$|D\chi_E|\not\equiv0$, and hence $\partial^*E$ is nonempty.  Choose
$x_0\in\partial^*E$ and define
\[
 \Theta_E(r)=\frac{\Per(E;B_r(x_0))}{\omega_n r^n},
 \qquad r>0,
\]
where $\omega_n$ is the volume of the unit ball in $\R^n$. Since the boundary
varifold of a perimeter minimizer is stationary,  the
monotonicity formula \cite{Simon1983} implies that for $0<\rho<R$,
\begin{equation}\label{eq:monotonicity-exact}
 \Theta_E(R)-\Theta_E(\rho)
 =\frac1{\omega_n}
  \int_{B_R(x_0)\setminus B_\rho(x_0)}
  \frac{|(x-x_0)^\perp|^2}{|x-x_0|^{n+2}}
  \,d|D\chi_E|(x)\ge0,
\end{equation}
where $(x-x_0)^\perp$ denotes orthogonal projection onto the  normal direction.
De Giorgi's
blow-up theorem at the reduced-boundary point $x_0$ gives
\begin{equation}\label{eq:density-zero}
 \Theta_E(0+)=1.
\end{equation}

Furthermore, for almost every \(R>0\), define \(F_R:=E\setminus B_R(x_0)\). By minimality,
\[
P(E;B_{R+\varepsilon}(x_0)) \le P(F_R;B_{R+\varepsilon}(x_0)).
\]
Letting \(\varepsilon\to 0\)  gives
\[
P(E;B_R(x_0)) \le P(F_R;B_R(x_0)) = \mathcal H^n(E^{(1)}\cap\partial B_R(x_0)) \le C_nR^n.
\]
For arbitrary \(R>0\), choose \(r\in(R,2R)\). By monotonicity,
\[
P(E;B_R(x_0)) \le P(E;B_r(x_0)) \le C_nr^n \le 2^nC_nR^n.
\]
Thus \(\Theta_E(R)\le C\) for all \(R>0\). Since \(\Theta_E\) is nondecreasing,
\[
\Theta_\infty:=\lim_{R\to\infty}\Theta_E(R)<\infty.
\]
Therefore, the density at infinity is finite.

Take any sequence \(R_j\to\infty\), consider the blow-downs of \(E\) at \(x_0\):
\[
 E_j:=\frac{E-x_0}{R_j}.
\]
Every $E_j$ is perimeter-minimizing, directed along $a$, with
$0\in\partial E_j$. For any compact set $K$, choose $\rho>0$ such that $K\subset B_\rho(0)$, the growth estimate implies that
\[P(E_j;K)\leq P(E_j;B_\rho(0))=R_j^{-n}P(E;B_{\rho R_j}(x_0))\leq C\rho^n,\]
where the upper bound is independent of $j$.
By BV compactness,  there exists a locally finite-perimeter set \(C\) such that
\begin{equation}\label{eq:blowdown-l1}
 \chi_{E_j}\longrightarrow\chi_C
 \quad\text{in }L^1_{\mathrm{loc}}(\R^{n+1})
\end{equation}
up to a subsequence.
As a special case of Lemma~\ref{lem:compactness}, we obtain $C$ is a global perimeter minimizerand the density estimate implies \(0\in\partial C\). Thus \(C\) is nontrivial.

By coarea formula and \eqref{eq:blowdown-l1}, for \(0<\alpha<\beta\),
\[
 \int_\alpha^\beta\int_{\partial B_r}
 |\Tr\chi_{E_j}-\Tr\chi_C|\,d\Haus^n\,dr\longrightarrow0.
\]
Thus, up to a subsequence,
\[
\int_{\partial B_r} |\operatorname{Tr}\chi_{E_j}-\operatorname{Tr}\chi_C|\,d\mathcal H^n \to 0
\quad\text{for a.e. }r>0.
\]
For such \(r\), lower semicontinuity yields
\[
P(C;B_r)\le \liminf_{j\to\infty} P(E_j;B_r).
\]
On the other hand, define
\[
 F_j=(C\cap B_r)\cup(E_j\setminus\overline{B_r}).
\]
By a similar calculation as in \eqref{eq:gluing-limit}, the minimality of $E_j$ gives
\[
 \Per(E_j;B_r)
 \le \Per(C;B_r)+
 \int_{\partial B_r}|\Tr\chi_{E_j}-\Tr\chi_C|\,d\Haus^n.
\]
Therefore,
\begin{equation}\label{eq:blowdown-strict}
 \Per(E_j;B_r)\longrightarrow\Per(C;B_r)
 \quad\text{for almost every }r>0.
\end{equation}

Scaling and \eqref{eq:blowdown-strict} now yield, for almost every $r>0$,
\begin{equation}\label{eq:blowdown-density}
 \Theta_C(r)=\frac{\Per(C;B_r)}{\omega_n r^n}
 =\lim_{j\to\infty}
  \frac{\Per(E;B_{rR_j}(x_0))}{\omega_n(rR_j)^n}
 =\Theta_\infty.
\end{equation}
The above identity, together with the monotonicity of \(\Theta_C\), implies that \(\Theta_C(r)=\Theta_\infty\) for every \(r>0\). The equality case gives
$x^\perp=0$ for $|D\chi_C|$-almost every $x\ne0$, and thus $C$ is a cone. By Lemma~\ref{lem:directedclosed} and the cone classification, \(C\) is a halfspace through the origin. As a halfspace, \(P(C;B_r)=\omega_n r^n\), and \eqref{eq:blowdown-density} gives \(\Theta_\infty=1\).

Finally, since \(\Theta_E(0+)=1\) and \(\Theta_\infty=1\), monotonicity yields \(\Theta_E(r)=1\) for every \(r>0\). The equality case in the monotonicity formula implies \(x^\perp=0\) for \(|D\chi_E|\)-a.e. \(x\), hence \(E-x_0\) is a cone. Therefore each rescaling \(E_j=(E-x_0)/R_j\) coincides with \(E-x_0\). Since \(E_j\to C\) in \(L^1_{\mathrm{loc}}\) and \(C\) is a halfspace, we conclude  that \(E\) is a halfspace.
\end{proof}

\section{Flat Compactness and Proof of the Main Theorem}
\label{sec:flatness-main-proof}

In this section, we will  provide the proof of Theorem \ref{thm:main}.  We first recall the Figalli's regularity theorem established in \cite{Figalli2017}, we follow the notation and numbering of that paper.

Let $\Lambda_n(\R^{n+1})$ be the space of n-vectors in $\R^{n+1}$ and let \(\mathcal F:\Lambda_n(\R^{n+1})\to\R\) satisfy the following structural assumptions:
\begin{align}
&\mathcal F(\lambda \xi)=\lambda \mathcal F(\xi) \qquad \forall \lambda>0,\ \xi\in\Lambda_n(\R^{n+1}), \tag{2.2} \\
&\mathcal F(\xi)\ge 1 \qquad \forall |\xi|=1, \tag{2.3} \\
&\mathcal F(\eta)-\langle d\mathcal F(\xi),\eta\rangle\ge 0 \qquad \forall |\xi|=|\eta|=1, \tag{2.4} \\
&\mathcal F(\eta)-\langle d\mathcal F(\xi),\eta\rangle\ge \frac{1}{2}|\xi-\eta|^2
\qquad \forall |\xi|=|\eta|=1,\ |\xi-\pmb e_0|\le \rho_0, \tag{2.5} \\
&\sup_{|\xi|=1}\bigl(\mathcal F(\xi)+|d\mathcal F(\xi)|\bigr)\le A_0, \tag{2.6} \\
&\mathcal F(\eta)-\langle d\mathcal F(\xi),\eta\rangle\le A_0|\xi-\eta|^2
\qquad \forall |\xi|=|\eta|=1,\ |\xi-\pmb e_0|\le \rho_0, \tag{2.7}
\end{align}
where \(\pmb e_0:=e_1\wedge\cdots\wedge e_n\in \Lambda_n(\R^{n+1})\), \(d\mathcal F\) denotes the differential of \(\mathcal F\), and \(A_0,\rho_0>0\) are universal constants.

For \(r>0\) and \(u\in C^1(B_r)\), define the cylindrical excess
\[
\mathcal E(T,r,u):=\frac1{r^n}\int_{\mathcal C_r}|\vec T-\vec U|^2\,d\|T\|,
\]
where
\[
\vec U(z):=\frac{\Lambda_n\nabla U(p(z))}{|\Lambda_n\nabla U(p(z))|},\qquad U(x):=(x,u(x)).
\]

\begin{lemma}\cite[Theorem 2.1]{Figalli2017}\label{lemma4.1}
\label{thm:figalli-2017}
Let \(\mathcal F:\Lambda_n(\R^{n+1})\to\R\) satisfy $(2.2)-(2.7)$ and let \(T\) be an \(n\)-dimensional integer rectifiable current in \(\mathcal C_R:=B_R(0)\times\R\) satisfying:
\begin{itemize}
\item[(H1)] \(0\in\supp(T)\), \(\supp(T)\subset\overline{\mathcal C}_R\) is compact, and \(\supp(\partial T)\subset\partial \mathcal C_R\);
\item[(H2)] \(T\) is \(\mathcal F\)-minimal in \(\mathcal C_R\), i.e.
\[
\mathbb F(T)\le \mathbb F(T+X)
\]
for every rectifiable \(n\)-current \(X\) with \(\partial X=0\) and \(\supp X\subset \mathcal C_R\);
\item[(H3)] \(p_\#(T\llcorner \mathcal C_R)=[B_R]\), where \(p:\R^{n+1}\to\R^n\) is the projection \(p(x,y)=x\).
\end{itemize}
Then there exist constants \(\delta>0\), and a function \(u:B_{R/2}\to\R\) of class \(C^{1,\delta}\) such that if
\[
\mathcal E(T,R,0)\le \epsilon_0,
\]
then
\[
\mathcal E(T,R/2,u)=0.
\]
Equivalently, \(T\) coincides inside \(\mathcal C_{R/2}\) with the \(n\)-current associated to the graph of \(u\).
\end{lemma}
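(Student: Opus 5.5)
Since the statement is quoted from \cite[Theorem 2.1]{Figalli2017}, the plan is to reproduce the De Giorgi--Almgren excess-decay scheme for the parametric integrand $\mathcal F$. The guiding principle is that (2.2)--(2.7) are precisely the quantitative convexity and $C^{1,1}$-type bounds of $\mathcal F$ near $\pmb e_0$ that such a scheme consumes; no third derivatives are used. The target is an \emph{excess-decay lemma}: there are $\epsilon_0,\theta\in(0,1/4)$ and $C$ such that if $\mathcal E(T,r,\ell)\le\epsilon_0$ for some affine $\ell$ with $|\nabla\ell|$ small, then
\[
\mathcal E(T,\theta r,\ell')\le \tfrac12\,\mathcal E(T,r,\ell),\qquad |\nabla\ell'-\nabla\ell|^2\le C\,\mathcal E(T,r,\ell),
\]
for a new affine $\ell'$. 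Iterating this at every center in $B_{R/2}$ gives $\mathcal E(T,\rho,\ell_{x,\rho})\le C(\rho/R)^{2\delta}\epsilon_0$. By (H3) and the height bound below, $T$ is a multiplicity-one graph over $B_{R/2}$, and Campanato's criterion gives a $C^{1,\delta}$ function $u$ with $\mathcal E(T,R/2,u)=0$.

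\textbf{Step 1 (preliminary estimates).} I will first derive three estimates from minimality (H2) and (2.3)--(2.7).
\begin{itemize}
\item A height bound: $\operatorname{osc}$ of the vertical coordinate on $\supp T\cap\mathcal C_{r/2}$ is at most $C r\,\mathcal E^{1/(2n)}$. This uses the almost-monotonicity/density bounds that follow from comparing with cones, together with (H3).
\item A Lipschitz approximation: there is $f$ with $\operatorname{Lip} f\le C\mathcal E^{\gamma}$ whose graph agrees with $T$ outside a set of $\|T\|$-measure at most $Cr^n\mathcal E^{1+\gamma}$. It is built from a maximal-function bound on the excess and the projection identity (H3).
\item A Caccioppoli (reverse Poincaré) inequality: $\mathcal E(T,r/2,\ell)\le C r^{-n-2}\int_{\mathcal C_r}|y-\ell(x)|^2\,d\|T\|$. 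It is proved by comparing $T$ with the graph of a cut-off interpolation between $T$ and the plane $\ell$. Here (2.5) and (2.7) enter: $\mathcal F(\eta)-\langle d\mathcal F(\xi),\eta\rangle$ is comparable to $|\xi-\eta|^2$, so the $\mathcal F$-energy gap controls the tilt excess from both sides.
\end{itemize}

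\textbf{Step 2 (linearization).} I will show that $f$ is almost a solution of the constant-coefficient elliptic system $\operatorname{div}(A\nabla f)=0$, where $A=D^2(\mathcal F\circ\Lambda_n\nabla U)|_{\nabla U=0}$. Ellipticity of $A$ follows from (2.5) and boundedness from (2.7). Because $\mathcal F$ is only $C^{1,1}$-type, I will not Taylor expand to third order. Instead I will test the first variation with vertical deformations $y\mapsto y+s\zeta(x)$ and use the difference $d\mathcal F(\xi)-d\mathcal F(\pmb e_0)-A(\xi-\pmb e_0)$. This difference is $o(|\xi-\pmb e_0|)$ uniformly, provided $d\mathcal F$ is differentiable at $\pmb e_0$; otherwise I would work with difference quotients. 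The result is
\[
\Bigl|\int A\nabla f\cdot\nabla\zeta\Bigr|\le o(1)\,\mathcal E^{1/2}r^{n/2}\|\nabla\zeta\|_\infty .
\]
I expect this harmonic-approximation step to be the main obstacle. With only (2.7), the linearization error is merely $O(|\xi-\pmb e_0|^2)$ pointwise, and absorbing it requires the quadratic bound together with the smallness of the bad set from the Lipschitz approximation. My first attempt will be a compactness argument: normalize $f_j/\mathcal E_j^{1/2}$, show the limit is $A$-harmonic, and use Step 1 for strong $L^2$ convergence.

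\textbf{Step 3 (conclusion).} Decay for $A$-harmonic functions, namely $\theta^{-n-2}\int_{B_\theta}|h-(h(0)+\nabla h(0)x)|^2\le C\theta^2\int_{B_1}|h|^2$, combined with the Caccioppoli inequality of Step 1 and the choice of $\theta$ small then $\epsilon_0$ small, yields the excess-decay lemma. The iteration and Campanato argument described above then finish the proof.
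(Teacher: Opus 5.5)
The paper gives no proof of this statement. It is imported verbatim from Figalli's Theorem~2.1 and used as a black box, so your outline has to stand on its own as a reconstruction of that theorem. The overall frame (height bound, Lipschitz approximation, Caccioppoli inequality, excess decay, Campanato) is the right one, but Step~2 fails exactly where the hypotheses are ``minimal''.

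\textbf{Why Step~2 fails.} Conditions (2.5) and (2.7) are two-sided quadratic bounds on the Bregman quantity $\mathcal F(\eta)-\langle d\mathcal F(\xi),\eta\rangle$. They amount to uniform convexity and a $C^{1,1}$ bound for $\mathcal F$ near $\pmb e_0$, and nothing more. Your remark that ``with only (2.7) the linearization error is $O(|\xi-\pmb e_0|^2)$'' confuses this Bregman remainder with the remainder of $d\mathcal F$ itself. Since $d\mathcal F$ is merely Lipschitz, $d\mathcal F(\xi)-d\mathcal F(\pmb e_0)-A(\xi-\pmb e_0)$ is only $O(|\xi-\pmb e_0|)$, the same order as the main term. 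It is spread over the whole good set rather than concentrated on the bad set, so it survives division by $\mathcal E^{1/2}$.

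The matrix $A$ need not exist. Assuming $d\mathcal F$ differentiable at $\pmb e_0$ does not rescue the argument either. The excess-decay lemma must be applied at every tilted plane $\ell_k$ with constants uniform in $k$, which requires $D^2\mathcal F$ to be continuous near $\pmb e_0$ --- precisely what the theorem avoids.

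Here is a concrete example. Take the nonparametric integrand $G(p)=\sqrt{1+|p|^2}+c\,|p|^2\sin(\log|p|)$ near $p=0$ (suitably cut off, $c$ small); it is uniformly convex and $C^{1,1}$. In your compactness step, $g_j=f_j/\mathcal E_j^{1/2}$ almost minimizes $\int G_{\epsilon_j}(\nabla g)$, where $\epsilon_j=\mathcal E_j^{1/2}$ and $G_\epsilon(q)=\epsilon^{-2}\bigl(G(\epsilon q)-G(0)-\epsilon\, DG(0)\cdot q\bigr)$. The subsequential limits $\tfrac12|q|^2+c\,|q|^2\sin(\vartheta+\log|q|)$ of $G_{\epsilon_j}$ are not quadratic. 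So the blow-up limit is not $A$-harmonic for any constant $A$, and the $\theta^2$ decay of Step~3 is not available.

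\textbf{The missing idea.} You need to approximate by the \emph{nonlinear} nonparametric problem instead of a linear one.
\begin{itemize}
\item All rescaled integrands $G_\epsilon$ (also when re-centred at any slope $\nabla\ell$ near $0$) are uniformly convex with $C^{1,1}$ bounds depending only on (2.5)--(2.7). Hence so is every limit $G_\infty$.
\item A minimizer $h$ of $\int G_\infty(\nabla h)$ has derivatives $\partial_k h$ solving $\operatorname{div}(B\nabla\partial_k h)=0$ with merely measurable, uniformly elliptic $B$. This is the difference-quotient argument of Lemma~\ref{lem:gaussgap}.
\item De Giorgi--Nash then gives $\theta^{-n}\int_{B_\theta}|\nabla h-\nabla h(0)|^2\le C\theta^{2\beta}\int_{B_1}|\nabla h|^2$, with $C,\beta$ uniform over the whole family.
\item Replacing $\theta^2$ by $\theta^{2\beta}$ still closes the excess-decay lemma once $C\theta^{2\beta}\le\frac14$. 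This is consistent with the statement producing only some $\delta>0$.
\end{itemize}
The price is that the limit problem is nonlinear, so weak $W^{1,2}$ convergence of $g_j$ no longer lets you pass to the limit in the first variation. You must instead identify the limit as a minimizer through energy comparisons based on (2.4)--(2.5). For example, the excess-energy density $\mathcal F(\vec T)-\langle d\mathcal F(\pmb e_0),\vec T\rangle$ is nonnegative, so discarding the bad set gives a lower bound to which lower semicontinuity applies.

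A smaller point: a general $\mathcal F$ has no monotonicity formula, so your height bound must rest on the comparison-based density bounds alone.
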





Now, we apply Figalli's regularity theorem to obtain the following low-regularity flat compactness near a multiplicity-one plane.

\begin{lemma}[Uniform flat compactness]\label{lem:c1compactness}
Let $\Psi_j$ be convex one-homogeneous integrands with uniform
parametric ellipticity and uniform $C^{1,1}(\Sph^n)$ bounds, and assume
\[
 \Psi_j\to|\cdot|\quad\text{in }C^1(\Sph^n).
\]
Let \(T_j:=\partial [\![ E_j ]\!]\) be oriented boundary currents in \(B_4\). Assume there exists a \(C^1\) closed \(n\)-form \(\omega_j\) on \(\mathbb R^{n+1}\) such that
\[
 \begin{gathered}
  \omega_j(\xi)\le\Psi_j(\nu_\xi)
  \quad\text{for every oriented unit $n$-vector $\xi$},\\
  \omega_j(\vec T_j)=\Psi_j(\nu_{\vec T_j})
  \quad\|T_j\|\text{-a.e.}
 \end{gathered}
\]
Suppose that, for a multiplicity-one oriented hyperplane $P$ through the
origin,
\begin{equation}\label{eq:strict-plane}
 T_j\to T_P\quad\text{as currents},
 \qquad \|T_j\|\stackrel*\rightharpoonup\|T_P\|
 \quad\text{as Radon measures locally in }B_4.
\end{equation}
Then the supports \(\operatorname{spt}T_j\) are eventually locally single \(C^{1,\alpha}\) graphs over \(P\) for a uniform \(\alpha\), with uniformly bounded \(C^{1,\alpha}\) norm, and their unit normals converge uniformly to $\nu_P$.

\end{lemma}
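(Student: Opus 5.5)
The plan is to reduce the statement to a uniform application of Lemma~\ref{thm:figalli-2017} in small cylinders centered at points of $\supp T_j$ near the origin. After a rotation we take $P=\{x_{n+1}=0\}$ with $\vec T_P=\pmb e_0$. For each $j$ we define $\mathcal F_j(\xi):=|\xi|\,\Psi_j(\nu_{\xi/|\xi|})$ on simple $n$-vectors, extended as in \cite{Figalli2017}. We then check the structural assumptions (2.2)--(2.7) with constants independent of $j$. One-homogeneity (2.2) is immediate. The normalization (2.3) follows after dividing by $\min_{\Sph^n}\Psi_j\to1$; this rescaling does not change minimizers. The convexity inequality (2.4) and the upper bounds (2.6)--(2.7) come from the uniform $C^{1,1}(\Sph^n)$ bounds together with the identity \eqref{eq:hessian-sphere-early}. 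The uniform quadratic lower bound (2.5) near $\pmb e_0$ follows from uniform parametric ellipticity, after multiplying $\mathcal F_j$ by a fixed constant if needed to obtain the factor $\tfrac12$. Hence $\rho_0,A_0$, and therefore the constants $\epsilon_0,\delta$ in Lemma~\ref{thm:figalli-2017}, do not depend on $j$.

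Next we verify (H2) and the smallness of the excess. Minimality (H2) follows from the calibration hypothesis exactly as in the proof of Lemma~\ref{lem:calibration}. If $\partial X=0$ and $\supp X$ is compact in a cylinder, then $X(\omega_j)=0$ because $\omega_j$ is closed. This gives $\mathbb F_j(T_j\llcorner U+X)\ge (T_j\llcorner U+X)(\omega_j)=\mathbb F_j(T_j\llcorner U)$. Smallness of excess uses the tilt computation from the end of Lemma~\ref{lem:compactness}. Since $\|T_j\|\stackrel*\rightharpoonup\|T_P\|$ and $T_j\to T_P$, for every $\eta\in C_c(B_4)$ we get $\int\eta|\vec T_j-\pmb e_0|^2d\|T_j\|\to0$. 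Fix $y_j\in\supp T_j\cap B_1$; these points converge to $P$ by the uniform density estimates \eqref{eq:density} and the measure convergence. For a fixed small $R$, the quantity $\mathcal E(T_j-y_j,R,0)$ is then eventually below $\epsilon_0$, uniformly in $y_j$.

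The main obstacle is arranging (H1) and (H3), since $T_j$ is not a priori compactly supported in a cylinder, nor a single sheet. For the height bound we use that $\supp T_j\cap \mathcal C_{2R}(y_j)$ lies in a slab $\{|x_{n+1}-(y_j)_{n+1}|\le h_j\}$ with $h_j\to0$. This holds because any point of $\supp T_j$ at a fixed height away from $P$ would carry mass $\ge c_1 s^n$ there, by the lower density bound, contradicting $\|T_j\|\stackrel*\rightharpoonup\|T_P\|$. We then restrict $T_j$ to $\mathcal C_R(y_j)$ by slicing at a good radius, so that $\supp\partial T\subset\partial\mathcal C_R$. The projection $p_\#(T_j\llcorner\mathcal C_R)$ is an integer multiple $k_j[B_R]$ by the constancy theorem, since its boundary lies over $\partial B_R$. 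Current convergence to $T_P$ forces $k_j\to1$, hence $k_j=1$ eventually, which gives (H3). Lemma~\ref{thm:figalli-2017} then shows that $\supp T_j\cap\mathcal C_{R/2}(y_j)$ is the graph of some $u_j\in C^{1,\delta}$.

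Finally we prove uniformity of the estimates and convergence of the normals. The $C^{1,\delta}$ norm of $u_j$ is controlled by $\epsilon_0,A_0$ through Figalli's excess-decay estimate, so it is uniform in $j$. Arzel\`a--Ascoli shows that any subsequential limit of $u_j$ must represent $T_P$, hence is $0$. Therefore $Du_j\to0$ uniformly, and $\nu_{T_j}\to\nu_P$ uniformly. Covering a compact subset of $B_4$ near $P$ by finitely many such cylinders finishes the proof, with $\alpha=\delta$.
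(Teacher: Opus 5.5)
Your proposal is correct and follows essentially the same route as the paper. Both arguments verify (2.2)--(2.7) uniformly for a rescaled $F_j$, obtain (H2) from the calibration, and get Hausdorff convergence of $\operatorname{spt}T_j$ to $P$ from the lower density bound; (H1) and (H3) then come from a common good slicing radius plus the constancy theorem, the excess vanishes by strict convergence, and the quantitative form of Figalli's theorem finishes. The differences are cosmetic: you get $Du_j\to0$ uniformly by Arzel\`a--Ascoli and identifying the limit graph with $T_P$, where the paper uses $L^2$-excess plus H\"older interpolation, and you should state explicitly the vertical truncation at heights missed by $\operatorname{spt}T_j$ (the paper's $Z_{\rho,h}$), since $T_j$ is only controlled on compact subsets of $B_4$.
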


\begin{proof}
After a fixed rotation, suppose
\[
 P=\R^n\times\{0\},
 \qquad \vec T_P=e_0:=e_1\wedge\cdots\wedge e_n,
\]
and write $\Cyl_\rho=B_\rho^n\times\R$ and $p(x,t)=x$.  We now verify the hypotheses of Figalli's theorem.

\smallskip
Define
\[
 F_j(\xi):=|\xi|\Psi_j(\nu_{\xi/|\xi|}),
 \qquad \xi\in\Lambda_n(\R^{n+1}).
\]
 We first check conditions (2.2)--(2.7) in Lemma \ref{lemma4.1} with uniform constants.
 For unit $n$-vectors $\xi,\eta$, define the Bregman quantity
\[
 \mathcal B_j(\xi,\eta)
 :=F_j(\eta)-dF_j(\xi)[\eta].
\]
Convexity and one-homogeneity give
$\mathcal B_j(\xi,\eta)\ge0$.  When $\eta$ is close to $\xi$, write
\[
 \eta=\frac{\xi+v}{|\xi+v|},\qquad v\perp\xi,
\]
and set $G_{j,\xi}(v)=F_j(\xi+v)$.  Euler's identity gives
\[
 \mathcal B_j(\xi,\eta)
 =\frac{G_{j,\xi}(v)-G_{j,\xi}(0)-DG_{j,\xi}(0)\cdot v}
        {\sqrt{1+|v|^2}}.
\]
Uniform parametric ellipticity and the common $C^{1,1}$ bound imply, on a
fixed ball $|v|\le r_0$,
\[
 c\lambda\Id\le D^2G_{j,\xi}(v)\le C\Lambda\Id
 \quad\text{for a.e. }v,
\]
with constants independent of $j$ and $\xi$.  Integrating twice from $0$ to $v$, and using $|v|\simeq|\xi-\eta|$, yields
\begin{equation}\label{eq:bregman-near}
 c_1|\xi-\eta|^2\le\mathcal B_j(\xi,\eta)
 \le C_1|\xi-\eta|^2
 \quad\text{if }|\xi-\eta|\le r_0.
\end{equation}
For $|\xi-\eta|\ge r_0$, use $F_j\to|\cdot|$ in $C^1$.  For the Euclidean
integrand $|\cdot|$, the corresponding Bregman quantity is
\[
 |\eta|-d|\cdot|(\xi)[\eta]
 =1-\xi\cdot\eta=\tfrac12|\xi-\eta|^2,
\]
and hence
\[
 \left|\mathcal B_j(\xi,\eta)-\tfrac12|\xi-\eta|^2\right|
 \le2\|F_j-|\cdot|\|_{C^1}.
\]
Since \(F_j \to |\cdot|\) in \(C^1\), there exists \(j_0\) such that for all \(j \ge j_0\),
\[2\|F_j - |\cdot|\|_{C^1} \le \frac{1}{4} r_0^2,\]
which implies that
\begin{equation}\label{e4.1}\frac{1}{4}|\xi - \eta|^2 \le \mathcal B_j(\xi, \eta) \le \frac{3}{4}|\xi - \eta|^2\quad\text{if  }|\xi-\eta|\geq r_0.\end{equation}
Combining with \eqref{e4.1} and \eqref{eq:bregman-near}, there exist constants $c_0,C_0>0$ independent of $j$ such that for all \(j\ge j_0\) and  \(|\xi|=|\eta|=1\)
\begin{equation}\label{eq:bregman-global}
 c_0|\xi-\eta|^2\le\mathcal B_j(\xi,\eta)
 \le C_0|\xi-\eta|^2.
\end{equation}

Choose one fixed $b>0$, independent of $j$, so large that
\[
 bc_0\ge\tfrac12,
 \qquad b\inf_{j,\,|\xi|=1}F_j(\xi)\ge1,
\]
and set $\widehat F_j=bF_j$.  Since scaling by a positive constant  preserves  minimizers, $\widehat F_j$ satisfies (2.2)--(2.7) with uniform constants. Hence Figalli's theorem applies with a common $A_0$ and $\rho_0\leq 1$.




\smallskip
We now check the hypotheses (H1)--(H3).
The density estimate proved in the proof of lemma \ref{lem:compactness} and the weak convergence \(\|T_j\|\overset{*}{\rightharpoonup}\|T_P\|\) imply
\[
\operatorname{spt}T_j \to P \quad\text{locally in Hausdorff distance on }B_1.
\]


\noindent Fix $z\in P\cap B_1$ and choose $z_j\in\supp T_j$ with $z_j\to z$. Up to a translation, we may assume \(0\in\operatorname{spt}T_j\) for all \(j\) and  the translated currents still converge
strictly to $P$ on compact subsets.  Choose \(\rho>0\) such that
\[
\|T_j\|((\partial B_\rho^n\times\mathbb R)\cap B_1)=0 \quad\text{for all }j\quad\text{and}
\quad
\|T_P\|((\partial B_\rho^n\times\mathbb R)\cap B_1)=0,
\]
and choose \(h>0\) such that
\[
Z_{\rho,h}=B_\rho^n\times(-h,h)\Subset B_2.
\]
Since \(\operatorname{spt}T_j\to P\) locally in Hausdorff distance, we may choose \(h\) so that
\[
\operatorname{spt}T_j\cap \overline{B_\rho^n}\times\{-h,h\}=\varnothing
\]
for all sufficiently large \(j\). Define
\[
S_j:=T_j\llcorner Z_{\rho,h}\subset \mathcal C_\rho.
\]
Then \(S_j\) satisfies (H1) with constants independent of \(j\).

We now verify (H1). Recall that
\[
\mathcal C_\rho:=B_\rho^n\times\mathbb R,
\qquad
p(x,t)=x,
\]
and set
\[
r(x,t):=|x|.
\]
Fix, for instance, an interval
\[
I:=\left(\frac14,\frac12\right).
\]
For each \(j\), the slicing theorem applied to the integral current
\(T_j\) and the Lipschitz function \(r\) shows that, outside a
Lebesgue-null subset of \(I\), the slice
\(\langle T_j,r,\rho\rangle\) is an integral
\((n-1)\)-current and
\[
\|T_j\|\bigl(\{r=\rho\}\cap B_1\bigr)=0.
\]
The same conclusion holds for \(T_P\). Since the family
\(\{T_j\}_{j\in\mathbb N}\cup\{T_P\}\) is countable, we may choose
one radius \(\rho\in I\), independent of \(j\), such that all these
properties hold simultaneously. In particular,
\[
\|T_j\|
 \bigl((\partial B_\rho^n\times\mathbb R)\cap B_1\bigr)=0
\quad\text{for every }j,
\]
and
\[
\|T_P\|
 \bigl((\partial B_\rho^n\times\mathbb R)\cap B_1\bigr)=0.
\]

Choose \(h>0\) sufficiently small that
\[
\overline{Z_{\rho,2h}}
:=
\overline{B_\rho^n}\times[-2h,2h]
\Subset B_1.
\]
Since \(\operatorname{spt}T_j\to P\) locally in Hausdorff distance
and \(P=\mathbb R^n\times\{0\}\), for all sufficiently large \(j\)
we have
\[
\operatorname{spt}T_j
 \cap
 \bigl(\overline{B_\rho^n}\times[-2h,2h]\bigr)
\subset
\overline{B_\rho^n}\times
\left(-\frac h2,\frac h2\right).
\]
In particular,
\[
\operatorname{spt}T_j
\cap
\bigl(\overline{B_\rho^n}\times\{-h,h\}\bigr)
=\varnothing.
\]
After discarding finitely many indices, define
\[
Z_{\rho,h}:=B_\rho^n\times(-h,h),
\qquad
S_j:=T_j\llcorner Z_{\rho,h}.
\]

Because \(\rho\) is a common slicing value and
\(\partial T_j=0\) in \(B_1\), the restriction--slicing identity
implies
\[
\operatorname{spt}(\partial S_j)
\subset
\partial B_\rho^n\times(-h,h)
\subset
\partial\mathcal C_\rho.
\]
Indeed, the restriction at the horizontal levels \(t=\pm h\)
produces no boundary term, since \(\operatorname{spt}T_j\) is
disjoint from a neighborhood of
\(\overline{B_\rho^n}\times\{-h,h\}\).
Moreover, since \(0\in\operatorname{spt}T_j\) and \(0\in Z_{\rho,h}\),
\[
0\in\operatorname{spt}S_j.
\]
Finally,
\[
\operatorname{spt}S_j
\subset
\overline{Z_{\rho,h}}
\subset
\overline{\mathcal C_\rho},
\]
and \(\overline{Z_{\rho,h}}\) is compact. Hence \(S_j\) satisfies
(H1), with the same cylinder \(\mathcal C_\rho\) for every
sufficiently large \(j\).

We also record the corresponding localized strict convergence.
Since
\[
\|T_P\|(\partial Z_{\rho,h})=0,
\]
the convergences
\[
T_j\to T_P,
\qquad
\|T_j\|\stackrel{*}{\rightharpoonup}\|T_P\|
\]
locally in \(B_1\) imply
\[
S_j=T_j\llcorner Z_{\rho,h}
   \longrightarrow
T_P\llcorner Z_{\rho,h}
\]
as currents and
\[
\mathbf M(S_j)
=
\|T_j\|(Z_{\rho,h})
\longrightarrow
\|T_P\|(Z_{\rho,h})
=
|B_\rho^n|.
\]
Here \(T_P\llcorner Z_{\rho,h}\) is precisely the multiplicity-one
oriented current associated with
\(B_\rho^n\times\{0\}\).


 For any rectifiable \(n\)-current \(X\) with \(\partial X=0\) and \(\operatorname{spt}X\subset\mathcal C_\rho\), we claim
\[
\mathcal F_{\widehat F_j}(S_j+X)\ge \mathcal F_{\widehat F_j}(S_j).
\]
It is trivial for $\mathcal F_{\widehat F_j}(S_j+X)=\infty$ . Otherwise $\widehat F_j\ge1$ and the finite mass of $S_j$ imply
\[
 \Mass(X)\le\Mass(S_j+X)+\Mass(S_j)<\infty.
\]
Therefore, it follows from Lemma~\ref{lem:calibration} that \(X(b\omega_j)=0\).  Consequently
\[
\mathcal F_{\widehat F_j}(S_j+X)\ge (S_j+X)(b\omega_j)=S_j(b\omega_j)=\mathcal F_{\widehat F_j}(S_j).
\]
Thus (H2) holds.

 Since \(\operatorname{spt}(\partial S_j)\subset\partial\mathcal C_\rho\), we have \(\partial(p_\#S_j)=0\) in \(B_\rho^n\). Therefore,
 \begin{equation}\label{eq:degree}
 p_\#S_j=k_j\llbracketbracket{B_\rho^n}
 \end{equation}
for an integer $k_j$. The convergence \(S_j\to T_P\llcorner Z_{\rho,h}\)  implies  $p_\#S_j\to \llbracketbracket{B_\rho^n}$, hence  $k_j\to1$.  Since $k_j$ is integral, $k_j=1$ for all large $j$.
This is (H3).

\smallskip

From \(S_j\to T_P\llcorner Z_{\rho,h}\) and \(p_\#S_j=[B_\rho^n]\), we have
\[
\mathbf M(S_j)\to |B_\rho^n|,\qquad
S_j(dx^1\wedge\cdots\wedge dx^n)=|B_\rho^n|.
\]
Therefore,
\begin{equation}\label{e4.2}\begin{split}
\mathbf E(S_j,\rho,0):=&\rho^{-n}\int_{\mathcal{C}_\rho}|\vec{S}_j-e_0|^2 d\|S_j\|\\
=&2\rho^{-n}(\mathbf M(S_j)-|B_\rho^n|)\to0.
\end{split}\end{equation}

 Hence, by Lemma~\ref{lemma4.1} \cite{Figalli2017} together with the
quantitative estimates in its proof, there exist
\(\alpha\in(0,1)\)  such that
for all sufficiently large \(j\), \(S_j\)  is the graph of \(f_j\in C^{1,\alpha}(B_{\rho/2}^n)\) with
\[
\|f_j\|_{C^{1,\alpha}(B_{\rho/2}^n)}\le C,
\]
where \(C\) is independent of \(j\). In particular,
\begin{equation}\label{eq:uniform-holder-gradient}
[Df_j]_{C^\alpha(B_{\rho/3}^n)}\le C
\quad\text{and}\quad
\|Df_j\|_{L^\infty(B_{\rho/3}^n)}\le C.
\end{equation}
On the graphical region, $\vec{S}_j = \frac{e_0 + \Lambda_n \nabla f_j}{\sqrt{1 + |Df_j|^2}}$, and then we have
\[
\int_{\mathcal C_{\rho/3}}|\vec S_j-e_0|^2\,d\|S_j\|=
2\int_{B_{\rho/3}^n}\bigl(\sqrt{1+|Df_j|^2}-1\bigr)\,dx.
 \]
Since \(|Df_j|\le C\) on \(B_{\rho/3}^n\), there exists a constant \(c>0\) depending only on \(C\) such that
\[
\sqrt{1+|Df_j|^2}-1 \ge c|Df_j|^2.
\]
Therefore, the excess convergence \eqref{e4.2} yields
\begin{equation}\label{e4.3}
\|Df_j\|_{L^2(B_{\rho/3}^n)}\to 0.
\end{equation}


It suffices to prove uniform convergence of the normals.
Since the $C^1$ norms are bounded, we have $|Df_j|\sim|\vec T_j-\vec T_P|$.
Let $K\Subset B_{\rho/3}^n$ and set $M_j:=\|Df_j\|_{L^\infty(K)}$. If $M_j\not\to0$, then by \eqref{eq:uniform-holder-gradient} there exists a ball with radius $r\ge cM_j^{1/\alpha}$ on which $|Df_j|\ge M_j/4$. Hence
\[
\int_{B_{\rho/3}^n}|Df_j|^2\,dx \geq c M_j^{2+n/\alpha},
\]
contradicting \eqref{e4.3}. Thus $Df_j\to0$ uniformly on $K$,  and hence \(\vec T_j\to\vec T_P\) uniformly on compact subsets of \(B_1\).
By our orientation convention,
\[
\nu_{E_j}=\nu_{\vec T_j},
\qquad
\nu_P=\nu_{\vec T_P}.
\]
The map \(\xi\mapsto\nu_\xi\) is the restriction of a fixed linear
isometry from \(\Lambda_n(\mathbb R^{n+1})\) to
\(\mathbb R^{n+1}\). Therefore,
\[
|\nu_{E_j}-\nu_P|
=
|\vec T_j-\vec T_P|,
\]
and the oriented unit normals converge uniformly to \(\nu_P\).

\end{proof}
\medskip

We are now ready to   prove Theorem~\ref{thm:main}.

\begin{proof}[Proof of Theorem~\ref{thm:main}]
Assume by contradiction that  there are integrands $\Phi_j$ and
nonaffine entire solutions $u_j$ such that
\[
 \|\Phi_j-1\|_{C^2(\Sph^n)}\to0.
 \tag{33}\label{eq:sequence-close}
\]
For large $j$, the identity
\[
 D^2\Phi_j(\nu)[\tau,\tau]
 =\nabla^2_{\Sph^n}(\Phi_j|_{\Sph^n})(\nu)[\tau,\tau]
   +\Phi_j(\nu)|\tau|^2,
 \qquad \tau\perp\nu,
 \tag{34}\label{eq:spherical-euclidean-hessian}
\]
implies that the \(\Phi_j\) are uniformly elliptic with uniform \(C^{1,1}\) bounds.

Let \(\Sigma_j = \partial E_{u_j}\) be the graph of \(u_j\).
By Lemma~\ref{lem:calibration}, each \(E_{u_j}\) is a global \(\Phi_j\)-minimizer, 
and it is directed.
Fix \(p_j\in\Sigma_j\).
By Lemma~\ref{lem:gaussgap}, there exists \(q_j\in\Sigma_j\) such that
\begin{equation}\label{eq:fixed-normal-gap}
|\nu_j(p_j)-\nu_j(q_j)|\ge 1.
\end{equation}
Let \(d_j=|p_j-q_j|\). For each \(j\), choose \(R_j\in SO(n+1)\) such that $R_j\nu_j(p_j)=e_{n+1}.$ Set
\[
 \lambda_j=(8d_j)^{-1},
 \qquad \mathcal S_j(X)=\lambda_jR_j(X-p_j),
 \qquad E_j=\mathcal S_j(E_{u_j}),
\]
and define the transformed integrand by
\[
 \Psi_j(\xi)=\Phi_j(R_j^{-1}\xi).
\]
The transformation \(\mathcal S_j\) scales the energy by \(\lambda_j^n\):
\[
P_{\Psi_j}(\mathcal S_j(E);\mathcal S_j(U)) = \lambda_j^n P_{\Phi_j}(E;U).
\]
Thus \(E_j\) is a global \(\Psi_j\)-minimizer. After relabeling \(q_j:=\mathcal S_j(q_j)\), we have
\begin{equation}\label{e4.4}
0,q_j\in\partial E_j,\qquad
\nu_j(0)=e_{n+1},\qquad
|q_j|=\frac18,\qquad
|\nu_j(0)-\nu_j(q_j)|\ge1. \end{equation}
Moreover, \(\Psi_j\to|\cdot|\) in \(C^2(\mathbb S^n)\) and lemma \ref{lem:calibration} implies that for every $j$, $E_j$ is  directed in  direction
\[
a_j := -R_j e_{n+1}.
\]



After passing to a subsequence,
\[
a_j\to a,\qquad q_j\to q,\qquad |q|=\frac18.
\]
By Lemma~\ref{lem:compactness},
\[
\chi_{E_j}\to\chi_E\quad\text{in }L^1_{\mathrm{loc}},
\]
where \(E\) is a global Euclidean perimeter minimizer. For every fixed $\rho>0$ and all large $j$,
$B_{\rho/2}(q_j)\subset B_\rho(q)$, hence the two-sided density estimate \eqref{eq:density} and local
$L^1$ convergence  imply that
\[
|E\cap B_\rho(q)|>0,\qquad |B_\rho(q)\setminus E|>0
\]
have positive measure. The same argument at \(0\) gives
\[
0,q\in\partial E. \tag{38}
\]
Thus \(E\) is nontrivial. By Lemma~\ref{lem:directedclosed}, \(E\) is directed in direction \(a\). Proposition~\ref{prop:directed-classification} then implies that \(E\) is a halfspace; denote it by \(H\), and set \(P:=\partial H\). Then \(0,q\in P\).

Apply Lemma~\ref{lem:compactness} with \(x=0\), \(R=1\). Then there exists \(r\in(1,2)\) such that
\[
T_j \to T_P,\qquad \|T_j\|\to\|T_P\|\quad{in  }B_r,
\]
where \(P\) is a multiplicity-one plane. Rescale by \(4/r\), so that \(B_r\) becomes \(B_4\) and
\[
\left|\frac4r q_j\right|=\frac1{2r}\in\left(\frac14,\frac12\right),
\]
 which implies that \(0\) and the rescaled \(q_j\) belong to \(\overline{B_{3/4}}\). Thus Lemma~\ref{lem:c1compactness} applies and gives
\[
\nu_j \to \nu_P \quad\text{uniformly on }\overline{B_{3/4}}.
\]
Therefore,
\[
|\nu_j(0)-\nu_j(q_j)|\le |\nu_j(0)-\nu_P|+|\nu_j(q_j)-\nu_P|\to0,
\]
contradicting  \eqref{e4.4}. This proves the existence of \(\delta_n\) and completes the proof.

\end{proof}

\end{document}